\theoremstyle{plain}
\newtheorem{thm}{Theorem}[section]
\newtheorem{theorem}[thm]{Theorem}
\newtheorem{lemma}[thm]{Lemma}
\newtheorem{corollary}[thm]{Corollary}
\newtheorem{proposition}[thm]{Proposition}
\theoremstyle{definition}
\newtheorem{remark}[thm]{Remark}
\newtheorem{definition}[thm]{Definition}
\newtheorem{example}[thm]{Example}
\numberwithin{equation}{section}
\title[Least negative intersection of positive closed currents]{Least negative intersections of positive closed currents on compact K\"ahler manifolds}
\author{Tuyen Trung Truong}
\address{Department of Mathematics, Syracuse University, Syracuse NY 13244} \email{tutruong@syr.edu}
\thanks{}
\begin{document}

\maketitle

\begin{abstract}
Let $X$ be a compact K\"ahler manifold of dimension $k$. Let $R$ be a positive closed $(p,p)$ current on $X$, and $T_1,\ldots ,T_{k-p}$ be positive closed $(1,1)$ currents on $X$. We define a so-called least negative intersection of the currents $T_1,T_2,\ldots ,T_{k-p}$ and $R$, as a sublinear bounded operator
\begin{eqnarray*}
\bigwedge (T_1,\ldots ,T_{k-p},R):~C^0(X)\rightarrow \mathbb{R}.
\end{eqnarray*} 
This operator is {\bf symmetric} in $T_1,\ldots ,T_{k-p}$. It is {\bf independent} of the choice of a quasi-potential $u_i$ of $T_i$, of the choice of a smooth closed $(1,1)$ form $\theta _i$ in the cohomology class of $T_i$, and of the choice of a K\"ahler form on $X$. Its total mass $<\bigwedge (T_1,\ldots ,T_{k-p},R),1>$ is the intersection in cohomology $\{T_1\}\{T_2\}\ldots \{T_{k-p}\}.\{R\}$. It has a semi-continuous property concerning approximating $T_i$ by appropriate smooth closed $(1,1)$ forms, plus some other good properties.

If $p=0$ and $T_1=\ldots =T_k=T$, we have a least negative Monge-Ampere operator $MA(T)=\bigwedge (T,\ldots ,T)$. If the set where $T$ has positive Lelong numbers does not contain any curve, then $MA(T)$ is positive. Several examples are given. 
\end{abstract}

\section{Introduction}

Wedge products of positive closed currents is a topic of great interest. One of its applications is to define the  Monge-Ampere operator for positive closed $(1,1)$ currents. The latter has many applications in complex geometry, among them is the celebrated solution of Yau \cite{yau} on the Calabi's conjecture. 

The seminal work of Bedford-Taylor \cite{bedford-taylor} defined wedge products of the form $dd^cu_1\wedge \ldots \wedge dd^cu_j \wedge R$, where $u_1,\ldots ,u_j$ are locally bounded psh functions and $R$ is a positive closed current. There have been many developments with important contributions from Kolodziej \cite{kolodziej}, Cegrell \cite{cegrell, cegrell1}, Demailly \cite{demailly1}, Fornaess-Sibony \cite{fornaess-sibony2}, and many others. In these works, the following monotone convergence is the corner stone: If $u_i^{(n)}$ is a sequence of  smooth psh functions decreasing to $u_i$, and if certain conditions are satisfied, then 
\begin{equation}
\lim _{n\rightarrow\infty}dd^cu_1^{(n)}\wedge \ldots \wedge dd^cu_j^{(n)}\wedge R
\label{EquationMonotoneConvergence}\end{equation}
exists, and the limit is independent of the choice of $(u_i^{(n)})$. We then define $dd^cu_1\wedge \ldots \wedge dd^cu_j\wedge R$ to be the limit. 

For the special case of projective spaces, a satisfying theory for intersection of general positive closed currents was given by Dinh-Sibony \cite{dinh-sibony4}, using superpotentials. This theory was extended to the case of compact K\"ahler manifolds in their next paper \cite{dinh-sibony3}. In their recent work \cite{dinh-sibony2}, they proposed a new approach using tangent currents.

In all of these works, the resulting intersection, whenever defined, is always positive.  

The problem of intersection of currents is, however, still very delicate, despite many efforts. For example, we may ask the following question:

{\bf Question 1.} If $D$ is a curve in a manifold $X$ of dimension $2$, can we define reasonably the wedge product $[D]\wedge [D]$?

If we want the resulting wedge product to be a {\bf positive measure}, then as shown by Shiffman and Taylor the answer to Question 1 and similar questions, is negative. 

In the local setting, Bedford-Taylor \cite{bedford-taylor1} defined the so-called non-pluripolar Monge-Ampere $(dd^c)^n$ as a positive measure, having no mass on pluri-polar sets. Their construction was developed in the global case of compact K\"ahler manifolds in Guedj-Zeriahi \cite{guedj-zeriahi} and   Boucksom-Eyssidieux-Guedj-Zeriahi \cite{boucksom-eyssidieux-guedj-zeriahi} to solve non-pluripolar Monge-Ampere equations whose right hand side is an arbitrary positive measure with no mass on pluripolar sets. Under these definitions, the intersection $[D]\wedge [D]$ in Question 1 is zero. In the global case, however, the non-pluripolar intersection product is not compatible with the intersection in cohomology. 

In this paper, we seek to define an intersection product which is compatible with the intersection in cohomology and is as close to positive measures as possible. 

\subsection{Main idea}

We discuss here the main idea, in the special case where $R=[D]$ is the current of integration over a curve $D$, and $T$ is a positive closed $(1,1)$ current. We write $T=\theta +dd^cu$, for a smooth closed $(1,1)$ form $\theta$ and a quasi-psh function $u$.

If $u$ is not identifically $-\infty$ on $D$, then the restriction $u|_D$ is a quasi-psh function on $D$, and we can define
\begin{eqnarray*}
T\wedge [D]:=\theta \wedge [D]+dd^c(u|_D).
\end{eqnarray*}
However, if $u=-\infty$ on $D$ then the term $dd^c(u|_D)$ is not defined. We may attempt to define, for example,  $dd^c(u|_D)=0$, arguing that for the approximation $u_j=\max (u,-j)$ of $u$, then $dd^c(u_j|_D)=0$ for all $j$. This would give $T\wedge [D]=\theta \wedge [D]$. However, if we use another representation $T=\theta '+dd^cu'$, then we again have $u'=-\infty$ on $D$, and thus must define $T\wedge [D]=\theta '\wedge [D]$. But this gives two different values for $T\wedge [D]$. 

The above discussion shows the difficulty when trying to define $T\wedge [D]$ as a measure with mass $\{T\}.\{D\}$, in the case where the quasi-potentials of $T$ are $-\infty$ on $D$. We thus need to proceed differently. Our main idea is first to identify the set of measures which can be reasonably associated with $T\wedge [D]$, and then to choose an appropriate supremum of these measures. In a certain sense, the supremum is less negative than any of the measures $\mu$.

For the first step, we take the monotone convergence in Equation (\ref{EquationMonotoneConvergence}) as the starting point. We then consider all signed measures $\mu$ of the form
\begin{eqnarray*}
\mu =\lim _{n\rightarrow\infty}T_n\wedge [D],
\end{eqnarray*}
where $T_n$ is an appropriate smooth approximation of $T$. 

For the second step, we need to be able to take a suppremum of the measures $\mu$. To this end, we need to have an order on such measures. We note that, in general, the set of measures $\mu$ considered above is not bounded (see Example \ref{Example1}). We also note that if $\mu$ and $\mu '$ are such two measures, then they have the same total mass, which is the intersection in cohomology $\{T\}.\{D\}$.  Therefore $\mu -\mu '$ is neither positive or negative, except if they are the same. Hence the usual order on measures is not useful here. In stead, the following partial order on signed measures is more suitable to our purpose.
\begin{definition}
Let $X$ be a compact K\"ahler manifold. 

1) Let $\mu =\mu ^+-\mu ^-$ be a signed measure on $X$, where $\mu ^{\pm}$ are positive measures with total masses $||\mu ^{\pm}||$. We denote $||\mu ||_{neg}$ to be the minimum of  $||\mu ^{-}||$, where $\mu ^{-}$ runs over all decomposition of $\mu$ as $\mu ^+-\mu ^-$.

2) Let $\mu _1$ and $\mu _2$ be signed measures on $X$. We write $\mu _1\succ \mu _2$ if either 

a) $||\mu _1||_{neg}<||\mu _2||_{neg}$.

or

b) $||\mu _1||_{neg}=||\mu _2||_{neg}$ and $\mu _1-\mu _2$ is a positive measure.

\label{DefinitionLeastNegativeMass}\end{definition} 
\begin{remark}
1) It is easy to check that $\succ$ is transitive. 

2) If $\mu _1-\mu _2$ is a positive measure then $||\mu _1||_{neg}\leq ||\mu _2||_{neg}$.  Note also that $||\mu ||_{neg}=0$ if and only if $\mu$ is itself a positive measure. 

Hence if $\mu _1\succ \mu _2$, then $\mu _1$ can be regarded to be closer to positive measures than $\mu _2$.  
\end{remark}

\subsection{Main results}

In the next section, we will define an intersection product $\bigwedge (T_1,T_2,\ldots ,T_{k-p},R)$ for a positive closed $(p,p)$ current $R$ and positive closed $(1,1)$ currents $T_1,\ldots ,T_{k-p}$, which is least negative in a certain sense to be specified later (see part 8) in Theorem \ref{TheoremMain1}). We now state the main results of the paper. 

\begin{theorem}
Let $X$ be a compact K\"ahler manifold of dimension $k$. Let $R$ be a positive closed $(p,p)$ current, and let $T_1,\ldots ,T_{k-p}$ be positive closed $(1,1)$ currents. 

Let $C^0(X)$ be the set of continuous functions $\varphi :X\rightarrow \mathbb{R}$. There is an operator $\bigwedge (T_1,\ldots ,T_{k-p},R):~C^0(X)\rightarrow \mathbb{R}$, which we call the least negative intersection, with the following properties. (In the below, we use the following notation, even when $\bigwedge (T_1,\ldots ,T_{k-p},R)$ is not a measure:
\begin{eqnarray*}
\int _X\varphi \bigwedge (T_1,\ldots ,T_{k-p},R):=<\bigwedge (T_1,\ldots ,T_{k-p},R),\varphi >.)
\end{eqnarray*} 

1) Sublinearity and boundedness: $\bigwedge (T_1,\ldots ,T_{k-p},R)$ is sublinear and bounded. More precisely, there is a constant $C>0$, independent of $T_1,\ldots ,T_{k-p}$ and $R$ such that for any $\varphi \in C^0(X)$ we have
\begin{eqnarray*}
|\int _X\varphi\bigwedge (T_1,\ldots ,T_{k-p},R)|~\leq C||\varphi ||_{L^{\infty}}||T_1||\times \ldots \times ||T_{k-p}||\times ||R||. 
\end{eqnarray*}
Here $||T_j||$ and $||R||$ are the total masses against a fixed K\"ahler form on $X$ of the positive currents $T_j$ and $R$.

Moreover, if $\lambda\in \mathbb{R}_{\geq 0}$, $B$ is a real number, and $\varphi _1,\varphi _2\in C^0(X)$ then 
\begin{eqnarray*}
\int _X\lambda\varphi _1\bigwedge (T_1,\ldots ,T_{k-p},R)&=&\lambda \int _X\varphi _1\bigwedge (T_1,\ldots ,T_{k-p},R),\\
\int _X(\varphi _1+\varphi _2)\bigwedge (T_1,\ldots ,T_{k-p},R)&\leq&\int _X\varphi _1\bigwedge (T_1,\ldots ,T_{k-p},R)+\int _X\varphi _2\bigwedge (T_1,\ldots ,T_{k-p},R),\\
\int _X(\varphi +B)\bigwedge (T_1,\ldots ,T_{k-p},R)&=&\int _X\varphi \bigwedge (T_1,\ldots ,T_{k-p},R)+B\int _X\bigwedge (T_1,\ldots ,T_{k-p},R).
\end{eqnarray*}

2) It is symmetric in $T_1,\ldots ,T_{k-p}$.

3) It is independent of the choice of a K\"ahler form on $X$.

4) Its total mass $<\bigwedge (T_1,\ldots ,T_{k-p},R),1>$ is the intersection in cohomology $\{T_1\}\ldots \{T_{k-p}\}.\{R\}$. 

5) If $U\subset X$ is an open set where the monotone convergence Equation (\ref{EquationMonotoneConvergence}) is satisfied for $T_1|_U,\ldots ,T_{k-p}|_U$ and $R|_U$, then $$\bigwedge (T_1,\ldots ,T_{k-p},R)|_U=T_1|_U\wedge \ldots \wedge T_{k-p}|_U\wedge R|_U,$$
where the right hand side is the current defined by the monotone convergence. 

6) Let $E_{>0}(T_j)$ be the set of points $x\in X$ for which the Lelong number $\nu (T_j,x)$ is positive. Assume that for any $1\leq i_1<\ldots <i_q\leq k-p$, the number 
\begin{eqnarray*}
\mathcal{H}_{2k-2p-2q+1}(E_{>0}(T_{i_1})\cap \ldots \cap E_{>0}(T_{i_q})\cap \sup (R))=0.
\end{eqnarray*}
Here $\mathcal{H}$ is the Hausdorff dimension.

Then $\bigwedge (T_1,\ldots ,T_{k-p},R)$ is positive, that is $<\bigwedge (T_1,\ldots ,T_{k-p},R),\varphi >$ is non-negative whenever $\varphi$ is non-negative.

7) (Stronger form of part 6) .) Assume that $R=[W]$ is the current of integration on a variety (need not to be irreducible) $W\subset X$. Let $E_{>0}(T_j)$ be the set of points $x\in X$ for which the Lelong number $\nu (T_j,x)$ is positive. Assume that for any $1\leq i_1<\ldots <i_q\leq k-p$, the set 
\begin{eqnarray*}
E_{>0}(T_{i_1})\cap \ldots \cap E_{>0}(T_{i_q})\cap W,
\end{eqnarray*}
does not contain any variety of dimension $>k-p-q$. Then $\bigwedge (T_1,\ldots ,T_{k-p},R)$ is positive.

8) For $j=1,\ldots ,k-p$, let us write $T_j=\theta _j+dd^cu_j$ for a smooth closed $(1,1)$ form $\theta _j$ and a quasi-psh function $u_j$. Let $u_j^{(n)}$ be sequences of smooth functions decreasing to $u_j$. Assume that that $\theta _j+dd^cu_j^{(n)}\geq \Omega $ for all $j$ and $n$, here $\Omega$ is a smooth closed $(1,1)$ form on $X$. Let $\mu$ (necessarily a signed measure) be a cluster point of the sequence $(\theta _1+dd^cu_1^{(n)})\wedge \ldots\wedge (\theta _{k-p}+dd^cu_{k-p}^{(n)})\wedge R$. Then there is a signed measure $\mu '$ of the same total mass as that of $\mu$, such that $\mu '\succ \mu$ and $\bigwedge (T_1,\ldots ,T_{k-p},R)-\mu '$ is positive. Here $||.||_{neg}$ is given in Definition \ref{DefinitionLeastNegativeMass}.
\label{TheoremMain1}\end{theorem}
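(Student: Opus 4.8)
The plan is to construct $\bigwedge(T_1,\ldots,T_{k-p},R)$ as an appropriate supremum, with respect to the order $\succ$ of Definition~\ref{DefinitionLeastNegativeMass}, of a natural family of signed measures, and then to read off the eight properties. Write $T_j=\theta_j+dd^cu_j$ and let $\mathcal{S}$ be the set of all weak-$*$ cluster points of sequences
\[
(\theta_1+dd^cu_1^{(n)})\wedge\cdots\wedge(\theta_{k-p}+dd^cu_{k-p}^{(n)})\wedge R,
\]
over all smooth closed $(1,1)$ forms $\theta_j\in\{T_j\}$, all quasi-psh $u_j$ with $\theta_j+dd^cu_j=T_j$, and all smooth $u_j^{(n)}\downarrow u_j$ for which there is a smooth closed $(1,1)$ form $\Omega$ with $\theta_j+dd^cu_j^{(n)}\ge\Omega$ for all $j,n$. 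By Demailly's regularization theorem \cite{demailly1} (with $\Omega=-C\omega$ for $C$ large, the Lelong numbers of $T_j$ being bounded) $\mathcal{S}\neq\emptyset$; by construction $\mathcal{S}$ depends only on $T_1,\ldots,T_{k-p},R$, is symmetric in the $T_j$, and involves no choice of Kähler form. Each approximating product is a closed $(k,k)$ current, hence a signed measure of total mass $c:=\{T_1\}\cdots\{T_{k-p}\}.\{R\}$ by Stokes, so every $\mu\in\mathcal{S}$ is a signed measure with $\mu(X)=c$ and $\|\mu\|_{TV}\le|c|+2\|\mu\|_{neg}$. Expanding the Demailly approximants into bounded combinations of products of positive forms, and using that the regularization losses are dominated by a constant times $\|T_j\|$, gives $m_0:=\inf_{\mathcal{S}}\|\cdot\|_{neg}\le C'\|T_1\|\cdots\|T_{k-p}\|\,\|R\|$ and $|c|\le C'\|T_1\|\cdots\|T_{k-p}\|\,\|R\|$ for a constant $C'=C'(X,\omega)$.

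Since $\|\cdot\|_{neg}=\sup\{-\langle\mu,\varphi\rangle:\varphi\in C^0(X),\ 0\le\varphi\le1\}$ is weak-$*$ lower semicontinuous and the signed measures with $\mu(X)=c$ and $\|\mu\|_{neg}\le m_0+1$ form a weak-$*$ compact set, the weak-$*$ closure $K$ of the part of $\mathcal{S}$ lying in this set is weak-$*$ compact; $\|\cdot\|_{neg}$ attains a minimum $\widetilde m\le m_0$ on $K$, and the set $\mathcal{S}_0:=\{\mu\in K:\|\mu\|_{neg}=\widetilde m\}$ is nonempty and weak-$*$ compact. Define $\langle\bigwedge(T_1,\ldots,T_{k-p},R),\varphi\rangle:=\max_{\mu\in\mathcal{S}_0}\langle\mu,\varphi\rangle$. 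This is sublinear, and since every $\mu\in\mathcal{S}_0$ satisfies $\mu(X)=c$ and $\|\mu\|_{TV}\le|c|+2m_0$, parts~1) and 4) follow at once (the displayed identities use $\langle\mu,\varphi+B\rangle=\langle\mu,\varphi\rangle+Bc$ and positive homogeneity), while parts~2) and 3) are inherited from the corresponding invariances of $\mathcal{S}$, $m_0$ and $\widetilde m$. For~5), if the monotone convergence (\ref{EquationMonotoneConvergence}) holds on an open set $U$, then $\mu|_U$ equals the Bedford--Taylor current $T_1|_U\wedge\cdots\wedge T_{k-p}|_U\wedge R|_U$ for every $\mu\in\mathcal{S}$ (the approximating products converge to it on $U$ by hypothesis, and restriction to $U$ is weak-$*$ continuous), hence for every $\mu\in K$. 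For~8), let $\mu$ be as in the statement, so $\mu\in\mathcal{S}$ and $\|\mu\|_{neg}\ge m_0\ge\widetilde m$: if this is strict, any $\mu'\in\mathcal{S}_0$ works ($\mu'\succ\mu$ by clause~a), $\mu'$ has the same total mass $c$, and $\bigwedge(T_1,\ldots,T_{k-p},R)-\mu'$ is positive by construction); if $\|\mu\|_{neg}=\widetilde m$, then $\mu\in\mathcal{S}_0$ and $\mu'=\mu$ works (clause~b), since $\mu'-\mu=0$).

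For~6) and 7) it suffices to show $m_0=0$: then $\widetilde m=0$, every $\mu\in\mathcal{S}_0$ is positive, and $\bigwedge(T_1,\ldots,T_{k-p},R)$ is a supremum of positive measures, hence positive. This is where Demailly's quantitative intersection theory \cite{demailly1} enters: along the unbounded loci of the potentials only the positive-Lelong-number loci $E_{>0}(T_j)$ genuinely contribute, and the Hausdorff-measure hypothesis of~6) is then precisely what is needed to define $T_1\wedge\cdots\wedge T_{k-p}\wedge R$ as a positive closed $(k,k)$ current which arises as a weak-$*$ limit of admissible approximations, hence lies in $\mathcal{S}$ with zero negative mass. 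For the sharper~7), with $R=[W]$, restricting the $T_j$ to a desingularization $\pi\colon\widetilde W\to W$ reduces the problem to a top-degree product of $k-p$ currents on $\widetilde W$; by Siu's theorem the sets $E_{>0}(\pi^*T_{i_1})\cap\cdots\cap E_{>0}(\pi^*T_{i_q})$ are countable unions of analytic sets, each of which --- being a subvariety of the intersection appearing in the hypothesis --- has dimension $\le k-p-q$, so the codimension version of Demailly's theory applies on $\widetilde W$ and an induction on $q$ again produces a positive closed current in $\mathcal{S}$ representing the intersection.

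The compactness and lower-semicontinuity arguments of the second paragraph are routine once the constraint $\mu(X)=c$ is carried throughout ($\mathcal{S}$ itself is not bounded in total variation, which is why one first truncates by $\|\cdot\|_{neg}$). The genuinely hard part is~6)--7): the natural bad locus for the classical wedge product is the polar set $\{u_j=-\infty\}$, which can be strictly larger than $E_{>0}(T_j)$, so one must show that the locus where $u_j=-\infty$ but $\nu(T_j,\cdot)=0$ does not obstruct positivity --- this requires a regularization whose negative part vanishes asymptotically there, glued with Demailly's estimates near the small loci $E_{>0}$, followed by a careful induction to exploit the dimension hypothesis in~7).
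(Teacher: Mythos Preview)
Your construction of the operator and the verification of 1)--5) and 8) track the paper closely: the paper also takes the supremum over the set $\mathcal{L}$ of weak limits of near-minimizers of $\|\cdot\|_{neg}$ in $\mathcal{G}$ (your $\mathcal{S}$), and reads off these properties directly. The minor technical difference---you minimize $\|\cdot\|_{neg}$ over a compact closure $K$, the paper works with limits of minimizing sequences---does not affect the argument.

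The genuine gap is in 6) and 7). You correctly diagnose the obstruction: the polar set $\{u_j=-\infty\}$ can be strictly larger than $E_{>0}(T_j)$, so Demailly's intersection theory does not apply to the $u_j$ themselves under the stated hypotheses. But your proposed remedy (``a regularization whose negative part vanishes asymptotically there, glued with Demailly's estimates near the small loci $E_{>0}$'') is only a wish, not a construction, and the desingularization route you sketch for 7) is not carried through. The paper's key idea, which you are missing, is to use the second part of Demailly's regularization theorem (Theorem~\ref{PropositionDemailly}(2)): approximate each $u_j$ by quasi-psh functions $u_j^{(n)}$ \emph{with analytic singularities} such that $\theta_j+dd^c u_j^{(n)}\ge -\epsilon_n\omega_X$ and the Lelong numbers increase to those of $u_j$. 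Such a $u_j^{(n)}$ is smooth outside an analytic set $V_{j,n}\subset E_{>0}(T_j)$, so its unbounded locus is exactly $V_{j,n}$; now the Hausdorff (resp.\ dimension) hypothesis on the $E_{>0}(T_{i_j})$ transfers verbatim to the $V_{i_j,n}$, and Demailly's wedge product applies to the tuple $(u_1^{(n)},\ldots,u_{k-p}^{(n)})$, placing it in $\mathcal{E}(R)$. Any cluster point of the resulting products is positive (the negative part is $O(\epsilon_n)$), and Lemma~\ref{LemmaClassEIsClosed} is then invoked to show that such cluster points lie in $\mathcal{G}$, i.e.\ can be realized by \emph{smooth} decreasing approximants as required by the definition. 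This yields $\kappa=0$ and hence positivity. Part 7) is handled the same way, not by desingularizing $W$: since the $V_{j,n}$ are analytic, each $V_{i_1,n}\cap\cdots\cap V_{i_q,n}\cap W$ is a variety, and the dimension hypothesis gives exactly the codimension condition needed.
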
 

We note that $\bigwedge (T_1,\ldots ,T_{k-p},R)$ is a measure if and only if the set $\mathcal{L}_{T_1,\ldots ,T_{k-p},R}$ in Definition \ref{DefinitionMeasures} has only one element (if two measures have the same  total mass then not one of them can dominate the other, except if they are the same). Here we give one criterion to check whether $\bigwedge (T_1,\ldots ,T_{k-p},R)$ is a positive measure. 

\begin{proposition}
Assume that the number $\kappa _{T_1,\ldots ,T_{k-p},R}$ in Definition \ref{DefinitionMeasures} is $0$ (this is guaranteed for example when conditions similar to those in Corollary \ref{Corollary1} are satisfied). Assume also that there is a Zariski open set $U\subset X$ over which the monotone convergence in Equation (\ref{EquationMonotoneConvergence}) is satisfied for $T_1|_U,\ldots ,T_{k-p}|_U,R|_U$, such that the mass of $T_1|_U\wedge \ldots \wedge T_{k-p}|_U\wedge R|_U$ is exactly $\{T_1\}\ldots \{T_{k-p}\}.\{R\}$. Then 
\begin{eqnarray*}
\bigwedge (T_1,\ldots ,T_{k-p},R)=(T_1|_U\wedge \ldots \wedge T_{k-p}|_U\wedge R|_U)^o,
\end{eqnarray*}
here the right hand side is the extension by $0$.
\label{Proposition0}\end{proposition}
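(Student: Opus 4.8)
The plan is to identify the operator with the positive measure $\nu:=(T_1|_U\wedge\cdots\wedge T_{k-p}|_U\wedge R|_U)^o$, by showing that the set $\mathcal{L}_{T_1,\ldots,T_{k-p},R}$ of Definition \ref{DefinitionMeasures} reduces to the single element $\nu$; the conclusion then follows from the remark just before the statement. First I would check that $\nu$ is a bona fide positive measure on $X$ of total mass $m:=\{T_1\}\cdots\{T_{k-p}\}.\{R\}$: on $U$ the monotone convergence in Equation (\ref{EquationMonotoneConvergence}) holds, so $T_1|_U\wedge\cdots\wedge R|_U$ is a positive measure there, of mass $m$ by hypothesis, and since $X\setminus U$ is a proper analytic set its extension by $0$ is still a positive measure of total mass $m$.

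Next I would analyse an arbitrary element of the construction: a weak cluster point $\mu$ of a sequence $(\theta_1+dd^cu_1^{(n)})\wedge\cdots\wedge(\theta_{k-p}+dd^cu_{k-p}^{(n)})\wedge R$ with $u_j^{(n)}\downarrow u_j$ smooth and $\theta_j+dd^cu_j^{(n)}\ge\Omega$. Two observations. (i) $\langle\mu,1\rangle=m$, since for each $n$ every factor $dd^cu_j^{(n)}$ is exact while all remaining factors together with $R$ are closed, so by Stokes the integral of the $n$-th term equals $\int_X\theta_1\wedge\cdots\wedge\theta_{k-p}\wedge R=m$. (ii) $\mu|_U=\nu|_U$: restricted to $U$, the $u_j^{(n)}|_U$ are smooth and decrease to $u_j|_U$, and locally on $U$ the currents $\theta_j+dd^cu_j^{(n)}$ are $dd^c$ of smooth functions dominating a fixed local potential, so the monotone convergence hypothesis on $U$ forces $(\theta_1+dd^cu_1^{(n)})|_U\wedge\cdots\wedge R|_U$ to converge to $T_1|_U\wedge\cdots\wedge R|_U$, with limit independent of the approximating sequence; hence any weak cluster point restricts on $U$ to $\nu|_U$. (This is also implicit in Theorem \ref{TheoremMain1}(5) together with Theorem \ref{TheoremMain1}(4).)

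Now $\mu|_U=\nu|_U\ge0$, so the negative part $\mu^-$ is carried by $X\setminus U$ and $\|\mu\|_{neg}=\mu^-(X\setminus U)$; also $\mu(X\setminus U)=m-\mu(U)=m-\nu(U)=0$, whence $\mu^+(X\setminus U)=\mu^-(X\setminus U)=\|\mu\|_{neg}$, so the total variation of $\mu-\nu$ equals $2\|\mu\|_{neg}$. The approximating currents having uniformly bounded total variation, the set of their cluster points is weakly compact and $\kappa_{T_1,\ldots,T_{k-p},R}=\inf_\mu\|\mu\|_{neg}$ is attained; by hypothesis it equals $0$. Therefore every $\mu\in\mathcal{L}_{T_1,\ldots,T_{k-p},R}$ is a positive measure with $\langle\mu,1\rangle=m$ and $\mu|_U=\nu|_U$, so $\mu(X\setminus U)=0$ forces $\mu|_{X\setminus U}=0$ and $\mu=\nu$; since $\mathcal{L}_{T_1,\ldots,T_{k-p},R}$ is nonempty, it equals $\{\nu\}$, and hence $\bigwedge(T_1,\ldots,T_{k-p},R)=\nu=(T_1|_U\wedge\cdots\wedge T_{k-p}|_U\wedge R|_U)^o$.

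The main obstacle is observation (ii): one must carefully justify that a \emph{global} decreasing approximation, which only yields $\theta_j+dd^cu_j^{(n)}\ge\Omega$ rather than plurisubharmonic potentials, restricts over $U$ to an admissible approximation for the Bedford--Taylor type monotone convergence available there, and that the limit obtained is the canonical wedge product, independent of the choice. The local-to-global comparison and the uniqueness in the monotone convergence on $U$ are precisely what enter here; the mass bookkeeping on $X\setminus U$ and the degeneration of the partial order $\succ$ to an equality are then routine.
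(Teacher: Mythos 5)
Your proposal is correct and follows essentially the same route as the paper's own proof: use $\kappa_{T_1,\ldots,T_{k-p},R}=0$ to see that every $\mu\in\mathcal{L}_{T_1,\ldots,T_{k-p},R}$ is a positive measure agreeing with $T_1|_U\wedge\cdots\wedge T_{k-p}|_U\wedge R|_U$ on $U$, and then use the equality of total masses to force $\mu=(T_1|_U\wedge\cdots\wedge T_{k-p}|_U\wedge R|_U)^o$, so the supremum defining $\bigwedge(T_1,\ldots,T_{k-p},R)$ collapses to this single measure. Your additional bookkeeping (Stokes for the total mass, the restriction argument on $U$, nonemptiness of $\mathcal{L}$) only makes explicit steps the paper leaves implicit.
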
 

We give here one example illustrating Proposition \ref{Proposition0}. 
\begin{example}
We let $J:\mathbb{P}^2\rightarrow \mathbb{P}^2$ be the map $$J[x_0:x_1:x_2]=[\frac{1}{x_0}:\frac{1}{x_1}:\frac{1}{x_2}].$$
Let $e_0=[1:0:0]$, $e_1:=[0,1,0]$, $e_2=[0:0:1]$. Let $\omega _{\mathbb{P}^2}$ be the Fubini-Study form on $\mathbb{P}^2$. For any point $p\in \mathbb{P}^2$ in generic position, there is a positive closed $(1,1)$ current $T(p)$ with the following properties:

i) The cohomology class of $T(p)$ is $2\omega _{\mathbb{P}^2}$,

ii) $T(p)$ is smooth outside $e_0,e_1,e_2,p$,

and

iii) $T(p)$ has Lelong number $1$ at $e_0,e_1,e_2$ and $p$.
 
Such a current can be constructed by averaging over curves of degree $2$ in $\mathbb{P}^2$ containing $e_0,e_1,e_2$ and $p$. 
 
We choose a finite number of such points $p_1,\ldots ,p_m$. Choose $\nu _1,\ldots ,\nu _m$ be positive numbers such that $\nu _1+\ldots +\nu _m=1$. 

We let $T=\nu _1T(p_1)+\ldots +\nu _mT(p_m)$.

Let $U=\mathbb{P}^2-\{x_0x_1x_2\not= 0\}$. Then $J:U\rightarrow U$ is an isomorphism. We let $$S=J^o(T)=(J|_U^*(T|_U))^o$$ be the strict transform of the current $T$. Then $S$ has the same cohomology class as that of $\omega _{\mathbb{P}^2}$, and $\bigwedge(S,S)=J^*(T\wedge T|_U)$ is a positive measure. Both its pluripolar part (i.e. Dirac masses) and non-pluripolar part are non-zero. A priori, the current $S$ and the measure $J^*(T\wedge T|_U)$ are quite singular near the curves $x_0x_1x_2=0$. 
\label{Example2}\end{example}

By Theorem \ref{TheoremMain1}, if $T_1,\ldots ,T_k$ are  positive closed $(1,1)$ currents, there is a symmetric least negative wedge product $\bigwedge (T_1,\ldots ,T_k)$. We obtain the following
\begin{corollary}
1) Let $E_{>0}(T_j)$ be the set where the Lelong number of $T_j$ is positive. If for every $1\leq i_1<\ldots <i_q\leq k$, the set $E_{>0}(T_{i_1})\cap \ldots \cap E_{>0}(T_{i_q})$ does not contain any subvariety $W\subset X$ of dimension $>k-q$, then $\bigwedge (T_1,\ldots ,T_k)$ is positive. 

2) If $X=\mathbb{P}^k$ then $\bigwedge (T_1,\ldots ,T_k)$ is positive.
\label{Corollary1}\end{corollary}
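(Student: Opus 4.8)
The plan is to get part 1) for free from Theorem~\ref{TheoremMain1}(7), and to prove part 2) by comparing $\bigwedge(T_1,\ldots,T_k)$ with the Dinh--Sibony intersection product on $\mathbb{P}^k$.

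\emph{Part 1).} A positive closed $(0,0)$ current on a connected manifold is a non-negative constant, so (arguing component by component) there is no loss in taking $R=[X]$, and then $\bigwedge(T_1,\ldots,T_k)=\bigwedge(T_1,\ldots,T_k,[X])$ in the notation of Theorem~\ref{TheoremMain1} with $p=0$. With $W=X$ the hypothesis of Theorem~\ref{TheoremMain1}(7) reads: for every $1\le i_1<\cdots<i_q\le k$ the set $E_{>0}(T_{i_1})\cap\cdots\cap E_{>0}(T_{i_q})\cap X=E_{>0}(T_{i_1})\cap\cdots\cap E_{>0}(T_{i_q})$ contains no subvariety of dimension $>k-0-q=k-q$, which is exactly the hypothesis of part 1). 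Hence Theorem~\ref{TheoremMain1}(7) applies and $\bigwedge(T_1,\ldots,T_k)$ is positive, with total mass $\{T_1\}\cdots\{T_k\}$ by Theorem~\ref{TheoremMain1}(4).

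\emph{Part 2).} One cannot simply quote part 1) here: on $\mathbb{P}^k$ its hypothesis can fail, e.g.\ for $k\ge2$ and $T_1=\cdots=T_k=[H]$ with $H$ a hyperplane, since then $E_{>0}(T_{i_1})\cap E_{>0}(T_{i_2})=H$ has dimension $k-1>k-2$. Instead I would use that on $\mathbb{P}^k$ the theory of \cite{dinh-sibony4} assigns to arbitrary positive closed $(1,1)$ currents $T_1,\ldots,T_k$ a \emph{positive} measure $\mu_{DS}$, with no general-position hypothesis, of total mass $\{T_1\}\cdots\{T_k\}$, obtained as a limit of wedge products of smooth positive-enough forms approximating the $T_j$. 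The key step would be to realise $\mu_{DS}$ as a cluster point of an approximation of the kind allowed in Theorem~\ref{TheoremMain1}(8): writing $T_j=\theta_j+dd^cu_j$, take a decreasing Demailly regularization $u_j^{(n)}\searrow u_j$ with a uniform lower bound $\theta_j+dd^cu_j^{(n)}\ge\Omega$ (available on $\mathbb{P}^k$); expanding the wedge products $(\theta_1+dd^cu_1^{(n)})\wedge\cdots\wedge(\theta_k+dd^cu_k^{(n)})$ into positive pieces shows they have uniformly bounded total variation, and by the continuity of the Dinh--Sibony intersection they cluster at $\mu_{DS}$. Then Theorem~\ref{TheoremMain1}(8) applied with $\mu=\mu_{DS}$ yields a signed measure $\mu'$ with $||\mu'||_{neg}\le||\mu_{DS}||_{neg}=0$ (the latter because $\mu_{DS}$ is positive) and with $\bigwedge(T_1,\ldots,T_k)-\mu'$ positive. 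Since $||\mu'||_{neg}=0$, the measure $\mu'$ is itself positive, so for every non-negative $\varphi\in C^0(X)$ one gets $\langle\bigwedge(T_1,\ldots,T_k),\varphi\rangle\ge\langle\mu',\varphi\rangle\ge0$, i.e.\ $\bigwedge(T_1,\ldots,T_k)$ is positive.

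I expect the main obstacle to be precisely the middle step of part 2): checking that the approximations tolerated by Theorem~\ref{TheoremMain1}(8) --- smooth, \emph{decreasing} in the potential, and only almost positive --- still compute a cluster value of the Dinh--Sibony product. Because \cite{dinh-sibony4} build $\mu_{DS}$ from automorphism-averaging regularizations, which are positive but not monotone, this needs their continuity/independence statements to cover Demailly's decreasing regularizations as well; once that compatibility is in hand, the extremality argument above closes at once. An alternative route sidestepping \cite{dinh-sibony4} would be to replace each $T_j$ by $(g_j)_*T_j$ for generic $g_j\in\mathrm{PGL}_{k+1}(\mathbb{C})$ --- a Baire-category argument over the countably many irreducible components of the analytic sets $\{x:\nu(T_j,x)\ge1/n\}$ shows that the hypothesis of part 1) then holds, so $\bigwedge((g_1)_*T_1,\ldots,(g_k)_*T_k)$ is positive --- and to let $g_j\to\mathrm{id}$; but that would require a semicontinuity of $\bigwedge$ under perturbation of the currents $T_j$ themselves, which is stronger than the smooth-approximation semicontinuity provided by Theorem~\ref{TheoremMain1}(8).
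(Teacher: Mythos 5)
Your part 1) is exactly the paper's argument: with $p=0$ and $W=X$ (so $R$ is the trivial $(0,0)$ current), the hypothesis of Theorem \ref{TheoremMain1}(7) is precisely the stated one, and nothing more is needed. Your observation that part 1) cannot simply be quoted for part 2) (e.g.\ $T_1=\cdots=T_k=[H]$) is also correct.

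Part 2), however, has a genuine gap, and it sits one step earlier than the obstacle you flag. Your route starts from the assertion that the superpotential theory of \cite{dinh-sibony4} assigns a positive wedge product $\mu_{DS}$ to \emph{arbitrary} positive closed $(1,1)$ currents $T_1,\ldots,T_k$ on $\mathbb{P}^k$ ``with no general-position hypothesis''. That is not what that theory provides: the Dinh--Sibony product is defined only for wedgeable currents (finiteness of the superpotential of one factor at the others, which for $(1,1)$ currents amounts to an integrability condition on quasi-potentials), and this fails exactly in the situations part 2) must cover --- your own example $T_1=\cdots=T_k=[H]$ is not wedgeable, since the quasi-potential of $[H]$ is $-\infty$ on $H$. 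So the positive measure you want to feed into Theorem \ref{TheoremMain1}(8) need not exist; and even where it does exist, the compatibility of decreasing Demailly regularizations with the Dinh--Sibony limit is, as you yourself note, left unproven. The paper's proof avoids both issues and is much shorter: on $\mathbb{P}^k$ every positive closed $(1,1)$ current $T_j=\theta_j+dd^cu_j$ is a limit of \emph{positive} smooth closed forms $\theta_j+dd^cu_j^{(n)}+\epsilon_n\omega_X$ with $u_j^{(n)}$ decreasing to $u_j$ and $\epsilon_n\searrow0$. Then $\theta_j+dd^cu_j^{(n)}\geq-\epsilon_1\omega_X$ is an admissible sequence in Definition \ref{DefinitionLeastNegativeMass1}, and since the correction terms involving $\epsilon_n\omega_X$ carry total mass $O(\epsilon_n)$, every cluster point of $(\theta_1+dd^cu_1^{(n)})\wedge\cdots\wedge(\theta_k+dd^cu_k^{(n)})$ is a positive measure lying in the set $\mathcal{G}$ of Definition \ref{DefinitionLeastNegativeMass1}; hence $\kappa_{T_1,\ldots,T_k}=0$ in Definition \ref{DefinitionMeasures}, every element of $\mathcal{L}_{T_1,\ldots,T_k}$ is positive, and positivity of $\bigwedge(T_1,\ldots,T_k)$ follows exactly as in the proof of Theorem \ref{TheoremMain1}(6) --- no external intersection theory and no appeal to part 8) is needed. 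If you wish to keep your extremality argument through part 8), this same regularization already supplies a positive cluster point $\mu$, at which point the detour through $\mu_{DS}$ is superfluous; your alternative route via generic projective translates indeed requires a semicontinuity in the currents themselves that the theorem does not provide, as you correctly suspected.
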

\begin{proof}

1) Follows from part 7) of Theorem \ref{TheoremMain1}.

2) Follows from the fact that any positive closed $(1,1)$ current $T=\theta +dd^cu$ on $X=\mathbb{P}^k$ is the limit of a sequence of positive closed smooth forms $T_n=\theta +dd^cu_n+\epsilon _n\omega _X$, where $u_n$ decreases to $u$ and $\epsilon _n$ decreases to $0$.  
\end{proof}

In particular, in the case $T_1=\ldots =T_k=T$, the least negative Monge-Ampere $MA(T)=\bigwedge (T,\ldots ,T)$ is positive if $E_{>0}(T)$ does not contain any curve. There are many examples where $T$ are currents of integration over a hypersurface and $MA(T)$ has negative global mass. The following gives one example of positive closed $(1,1)$ currents $T$ in dimension $3$ such that $T$ is smooth outside a curve $D$, but $MA(T)$ has total negative mass and has support on the curve. 
\begin{theorem}
Let $J_X:X\rightarrow X$ be the bimeromorphic map in Theorem 1.6 in \cite{truong}. There is a positive closed smooth $(1,1)$ form $\alpha$ on $X$ such that 

i) The cohomology class of $\alpha$ is nef, but that of $J_X^*(\alpha )$ is not nef. In fact, in cohomology $\{J_X^*(\alpha )\}.\{J_X^*(\alpha )\}.\{J_X^*(\alpha )\}=-3$.

ii) The Monge-Ampere operator $MA(J_X^*(\alpha ))$ has support in the indeterminate set $I(J_X)$ and has total mass $-3$.
\label{Theorem4}\end{theorem}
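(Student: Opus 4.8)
The plan is to reduce the statement to a purely cohomological computation for the action of $J_X^*$, combined with parts 4) and 5) of Theorem \ref{TheoremMain1}. Here $\dim X=3$. Let $I(J_X)$ and $I(J_X^{-1})$ be the indeterminacy loci of $J_X$ and of its inverse; since $J_X$ is bimeromorphic these are analytic subsets of codimension $\geq 2$, the set $U:=X\setminus I(J_X)$ is Zariski open, and $J_X$ restricts to a biholomorphism $J_X|_U\colon U\to V:=X\setminus I(J_X^{-1})$. First I would unwind Theorem 1.6 of \cite{truong} to extract the cohomological data attached to $J_X$: a nef class $\eta\in H^{1,1}(X,\mathbb{R})$ which is semi-ample — pulled back from a K\"ahler (or ample) class on a variety of dimension $<3$, so that in particular $\{\eta\}^3=0$ — and which satisfies $\{J_X^*\eta\}^3=\{J_X^*\eta\}.\{J_X^*\eta\}.\{J_X^*\eta\}=-3$. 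The appearance of the number $-3$, and the failure of $\{J_X^*\eta\}$ to be nef, is precisely what one imports from \cite{truong}; the computation itself is the usual bookkeeping of proper transforms and exceptional divisors in the blow-up presentation of $X$ via the projection formula.

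Next I would fix the form $\alpha$. Choose a surjective morphism $\phi\colon X\to S$ onto a lower dimensional projective variety and a K\"ahler form $\omega_S$ on $S$ with $\{\phi^*\omega_S\}=\eta$, and put $\alpha:=\phi^*\omega_S$. Then $\alpha$ is a smooth closed semi-positive (``positive'') $(1,1)$ form whose class $\eta$ is nef, and $\alpha^3=\phi^*(\omega_S^3)\equiv 0$ as a smooth $(3,3)$ form because $\dim S<3$. (More intrinsically, any smooth semi-positive representative $\alpha$ of $\eta$ satisfies $\alpha^3\geq 0$ pointwise and $\int_X\alpha^3=\{\eta\}^3=0$, hence $\alpha^3\equiv 0$; this is the only property of $\alpha$ that will be used, and it also forces $\alpha$ to be non-K\"ahler.) Now $\{\alpha\}=\eta$ is nef. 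On a compact K\"ahler threefold every nef class $\beta$ satisfies $\{\beta\}^3\geq 0$, being a limit of K\"ahler classes $\beta_\varepsilon=\beta+\varepsilon\omega$ with $\int_X\beta_\varepsilon^3>0$; since $\{J_X^*\alpha\}^3=-3<0$, the class $\{J_X^*\alpha\}$ is not nef. This establishes i).

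For ii), let $T:=J_X^*\alpha$ denote the pullback current, defined through a resolution $X\xleftarrow{\;p\;}\widehat{X}\xrightarrow{\;\pi\;}X$ of $J_X$ by $T:=\pi_*(p^*\alpha)$; being the pushforward of the smooth semi-positive form $p^*\alpha$, $T$ is a positive closed $(1,1)$ current with $\{T\}=\{J_X^*\alpha\}$. Over $U$ the map $J_X$ involves no blow-up, so $p$ and $\pi$ are isomorphisms over the relevant loci and $T|_U=(J_X|_U)^*(\alpha|_V)$ is a genuine smooth semi-positive form; in particular the monotone convergence of Equation (\ref{EquationMonotoneConvergence}) holds on $U$ for $T|_U,T|_U,T|_U$ (take constant sequences). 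Hence by part 5) of Theorem \ref{TheoremMain1},
\begin{eqnarray*}
MA(T)|_U\;=\;\bigwedge(T,T,T)|_U\;=\;(T|_U)^3\;=\;(J_X|_U)^*\!\big(\alpha^3|_V\big)\;=\;0.
\end{eqnarray*}
Thus $MA(T)=\bigwedge(J_X^*\alpha,J_X^*\alpha,J_X^*\alpha)$ vanishes on every continuous function supported in $U$, so its support is contained in $X\setminus U=I(J_X)$. Finally, by part 4) of Theorem \ref{TheoremMain1} its total mass is $<MA(T),1>=\{J_X^*\alpha\}.\{J_X^*\alpha\}.\{J_X^*\alpha\}=-3$. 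This proves ii).

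The main obstacle is the cohomological input of the first paragraph: for this specific $J_X$ one must exhibit a nef, semi-ample class $\eta$ with $\eta^3=0$ whose $J_X$-pullback has triple self-intersection exactly $-3$ (hence is not nef), which is the substance of Theorem 1.6 of \cite{truong}. Everything after that is formal, modulo one point to verify carefully: that the pullback current $\pi_*(p^*\alpha)$ restricts over $U$ to the smooth form $(J_X|_U)^*\alpha$, with no extra mass contributed by $\pi_*$ over $U$ — this is what licenses the use of part 5), and it holds because the resolution is an isomorphism there.
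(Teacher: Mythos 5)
There is nothing in this paper to compare your argument against: Theorem \ref{Theorem4} is stated in the introduction but Section 3 only proves Theorem \ref{TheoremMain1} and Proposition \ref{Proposition0}; the construction behind Theorem \ref{Theorem4} is entirely deferred to Theorem 1.6 of \cite{truong}. Judged on its own, your reduction is correct and is surely the intended mechanism: once you have a smooth semi-positive representative $\alpha$ with $\alpha^3\equiv 0$ pointwise (forced, as you note, by $\{\alpha\}^3=0$ and semi-positivity) and the cohomological fact $\{J_X^*\alpha\}^3=-3$, part 5) of Theorem \ref{TheoremMain1} applied on $U=X\setminus I(J_X)$, where the pullback current is a smooth form and Bedford--Taylor monotone convergence trivially holds, gives $MA(J_X^*\alpha)|_U=((J_X|_U)^*\alpha)^3=(J_X|_U)^*(\alpha^3)=0$, hence support in $I(J_X)$, and part 4) gives total mass $-3$; the non-nefness follows from $\{J_X^*\alpha\}^3<0$ since nef classes on a compact K\"ahler threefold have non-negative top self-intersection. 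Two small points to tidy: with the convention $J_X=\pi\circ p^{-1}$ for the resolution $X\xleftarrow{p}\widehat{X}\xrightarrow{\pi}X$, the pullback current is $p_*(\pi^*\alpha)$, not $\pi_*(p^*\alpha)$ (as written you have defined $(J_X^{-1})^*\alpha$); and the claim that $J_X|_U$ is a biholomorphism onto $X\setminus I(J_X^{-1})$ uses that $J_X$ is a pseudo-isomorphism, though your argument only needs $J_X$ holomorphic on $U$ so that $(J_X|_U)^*\alpha$ is smooth. The one substantive item you leave open --- exhibiting, for the specific $X$ and $J_X$ of \cite{truong}, a nef semi-ample class $\eta$ with $\eta^3=0$ and $\{J_X^*\eta\}^3=-3$ (for the Cremona-type involution on the blow-up of $\mathbb{P}^3$ at the four coordinate points one can take $\eta=H-E_0$, giving $J_X^*\eta=2H-2E_0-E_1-E_2-E_3$ and $8-8-1-1-1=-3$) --- is exactly the content the present paper also imports from \cite{truong}, so flagging it as external input rather than proving it is consistent with how the theorem is stated.
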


\begin{remark}
There have been many works on solving Monge-Ampere equations $MA(T)=\mu$ where $\mu$ is a positive measure with support on a pluripolar set (Lempert \cite{lempert, lempert1}, Celik-Poletsky \cite{celik-poletsky}, Demailly \cite{demailly2}, Lelong \cite{lelong}, Zeriahi \cite{zeriahi}, Xing \cite{xing}, Ahag-Cegrell-Czyz-Pham \cite{ahag-cegrell-czyz-pham},...). However, Theorem \ref{Theorem4} is different in that the total mass is negative.  
\end{remark} 

{\bf Acknowledgements.} The author would like to thank Viet-Anh Nguyen for his helpful comments on an earlier version of the paper.

\section{Least negative intersection}   

In this section we define the least negative intersection $\bigwedge (T_1,\ldots ,T_{k-p},R)$ and prove the results stated in the introduction.

Let us first recall some notations. The interested readers can consult for example the book Demailly \cite{demailly}. Let $X$ be a compact K\"ahler manifold of dimension $k$. An upper-semicontinuous  function $u$ on $X$ is quasi-psh if it is integrable and there is a smooth closed $(1,1)$ form $\alpha$ such that $T=\alpha +dd^cu$ is a positive current. In this case, we say that $u$ is a quasi-potential for $T$.    

For a quasi-psh function $u$, its Lelong number is defined as follows: For $x\in X$
$$\nu (u,x) =\liminf _{z\rightarrow x} \frac{u(z)}{\log |z-x|}.$$
If $T$ is a positive closed $(1,1)$ current and $u$ is a quasi-potential of $T$, then we define $\nu (T,x)=\nu (u,x)$.

We will use the following result, due to Demailly \cite{demailly1}. We recall that a quasi-psh function $u$ has analytic singularities if locally it can be written as 
\begin{eqnarray*}
u=\gamma +\frac{c}{2}\log (|f_1|^2+\ldots +|f_m|^2),
\end{eqnarray*}
where $\gamma$ is a smooth function, $c>0$ is a constant, $f_1,\ldots ,f_m$ are holomorphic functions.

\begin{theorem}
Let $X$ be a compact K\"ahler manifold with a K\"ahler form $\omega _X$. Let $T=\theta +dd^cu$ be a positive closed $(1,1)$ current on $X$.

1) There are a sequence of smooth functions $(u_n)$ decreasing to $u$, and a positive number $A>0$ such that $\theta +dd^cu_n\geq -A\omega _X$ for all $n$.

2) There are a sequence of quasi-psh functions $u_n$  with analytic singularities decreasing to $u$, and a sequence $\epsilon _n$ of positive numbers decreasing to $0$, such that the following are satisfied:

i) $\theta +dd^cu_n\geq -\epsilon _n\omega _X$ for all $n$,

and

ii) $\nu (u_n,x)$ increases to $\nu (u,x)$ uniformly with respect to $x\in X$.   
\label{PropositionDemailly}\end{theorem}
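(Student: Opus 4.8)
The plan is to prove the two statements by a common local-to-global scheme; they differ only in the device used to regularize on a coordinate chart and in how much positivity is lost when the local pieces are glued.

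\emph{Part 1.} Fix a finite cover $X=\bigcup_{j}\Omega_j$ by holomorphic coordinate balls, small enough that on a neighbourhood of each $\overline{\Omega_j}$ the form $\theta$ has a smooth local potential $\psi_j$, $\theta=dd^c\psi_j$. Then $v_j:=u+\psi_j$ is plurisubharmonic on $\Omega_j$, so its mollifications $v_{j,\varepsilon}:=v_j\ast\rho_\varepsilon$ (convolution in the chart coordinates against a radial kernel) are smooth and plurisubharmonic and decrease to $v_j$ as $\varepsilon\downarrow 0$ by the sub-mean-value property; setting $u_{j,\varepsilon}:=v_{j,\varepsilon}-\psi_j$ gives $\theta+dd^c u_{j,\varepsilon}=dd^c v_{j,\varepsilon}\ge 0$ on a slight shrinking of $\Omega_j$. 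It remains to glue the $u_{j,\varepsilon}$ into one function on $X$ keeping monotonicity in $\varepsilon$ and a uniform bound on the lost positivity. Here I would use Demailly's regularized maximum $\max_\eta$ in place of a partition of unity: after adding to each local piece suitable constants (and, if needed, a quadratic corrector) so that on the transition zones each piece is strictly dominated by its neighbours, the function $u_n$ obtained as the regularized maximum $\max_{\eta_n}$ of the (modified) pieces $u_{j,1/n}$ is well defined, smooth, decreasing in $n$ (with $\eta_n\downarrow 0$ chosen along the sequence), converges to $u$, and satisfies $\theta+dd^c u_n\ge -A\omega_X$, with $A$ depending only on the cover and on $\theta$ (through the $C^2$-norms of the differences $\psi_i-\psi_j$), not on $u$.

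\emph{Part 2.} Replace the coordinate mollification by the Bergman-kernel / multiplier-ideal construction. On each chart let $\sH_{m,j}$ be the Hilbert space of holomorphic functions $g$ on $\Omega_j$ with $\int_{\Omega_j}|g|^2e^{-2mv_j}\,dV<\infty$, fix an orthonormal basis $(g_{m,j,\ell})_\ell$, and put
\begin{eqnarray*}
u_{m,j}:=\frac{1}{2m}\log\Big(\sum_\ell|g_{m,j,\ell}|^2\Big)-\psi_j,
\end{eqnarray*}
which has analytic singularities and satisfies $\theta+dd^c u_{m,j}\ge 0$ on $\Omega_j$. I would then establish: (i) an upper bound $u_{m,j}\le u_{1/m}+C(\log m)/m$, where $u_r(x):=\sup_{B(x,r)}u$, coming from the Bergman-kernel sub-mean-value estimate; (ii) the lower bound $u_{m,j}(x)\ge u(x)-C/m$ at every point $x$ with $u(x)>-\infty$, obtained by using the Ohsawa--Takegoshi $L^2$ extension theorem to extend the value $e^{mu(x)}$ at $\{x\}$ to an element of $\sH_{m,j}$ of controlled norm; (iii) by Skoda's integrability theorem every $g\in\sH_{m,j}$ vanishes at $x$ to order between $m\nu(u,x)-k$ and $m\nu(u,x)$, whence $\nu(u,x)-k/m\le\nu(u_{m,j},x)\le\nu(u,x)$ uniformly in $x$. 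Globalizing by gluing the $u_{m,j}$ with $\max_\eta$ as in Part 1 -- using Ohsawa--Takegoshi again to compare $u_{m,i}$ and $u_{m,j}$ on overlaps with constants independent of $m$ -- one gets a global quasi-psh $u_m$ with analytic singularities, $\theta+dd^c u_m\ge -(C/m)\omega_X$, and the Lelong-number bounds of (iii). From (i) and (ii), $u_{2m}\le u_m+o(1)$ at every point where $u$ is finite; telescoping these errors along the dyadic subsequence $m=2^\ell$ (replacing $u_{2^\ell}$ by $u_{2^\ell}+b_\ell$ with $b_\ell\downarrow 0$ chosen to make the sequence monotone) yields, after relabelling, a \emph{decreasing} sequence $u_n$ of quasi-psh functions with analytic singularities and numbers $\epsilon_n\downarrow 0$ with $\theta+dd^c u_n\ge-\epsilon_n\omega_X$; since $u_n$ decreases to $u$ and each $u_n\ge u$, its Lelong numbers automatically increase, and by (iii) they increase to $\nu(u,\cdot)$ uniformly.

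\emph{Main obstacle.} The delicate step in both parts is the gluing, where one must simultaneously (a) preserve monotonicity in the parameter, (b) control -- and, in Part 2, drive to $0$ as $m\to\infty$ -- the negative contribution to $\theta+dd^c(\cdot)$ coming from the coordinate-versus-metric mismatch and from the modifications forced by the patching, and, in Part 2, (c) leave both the analytic-singularity structure and the sharp two-sided Lelong-number estimates intact; in particular the subsequence-and-telescoping argument used to reach a genuinely decreasing sequence, and the passage from the pointwise estimate $u_{2m}\le u_m+o(1)$ to it, must be arranged to be compatible with analytic singularities, which is where the $\max_\eta$ machinery is used. Items (a)--(b) are handled by $\max_\eta$ together with careful bookkeeping of the finitely many constants attached to the cover; item (c) rests on the Ohsawa--Takegoshi extension theorem for the chart-to-chart comparisons and on Skoda's integrability theorem for the vanishing orders, and it is here that the genuine depth of Demailly's theorem lies -- indeed it is essentially equivalent to Siu's analyticity theorem for the Lelong sublevel sets $E_c(u)$.
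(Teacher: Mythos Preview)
The paper does not prove this theorem at all: it is stated as a quotation of Demailly's regularization theorem, with the reference \cite{demailly1}, and is used as a black box throughout (see the sentence ``We will use the following result, due to Demailly'' immediately preceding the statement). So there is no ``paper's own proof'' to compare against.

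Your sketch is, in substance, an accurate outline of Demailly's original argument: local convolution plus $\max_\eta$ gluing for Part~1, and the Bergman--kernel/Ohsawa--Takegoshi construction with Skoda's lemma controlling the Lelong numbers for Part~2. One point that deserves a little more care than your write-up suggests is the claim that the glued function in Part~2 still has \emph{analytic} singularities. The regularized maximum $\max_\eta$ is a smooth operation, and a priori $\max_\eta(u_{m,i},u_{m,j})$ need not be locally of the form $\gamma+\tfrac{c}{2}\log\sum|f_\ell|^2$. What saves the day in Demailly's proof is precisely the Ohsawa--Takegoshi comparison you invoke: it gives $|u_{m,i}-u_{m,j}|\le C/m$ on overlaps, so that near any singular point all the local pieces blow down to $-\infty$ together and the regularized max coincides with any one of them in a full neighbourhood of the singular locus; away from the singularities everything is smooth. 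You gesture at this (``using Ohsawa--Takegoshi again to compare $u_{m,i}$ and $u_{m,j}$ on overlaps''), but it is worth making explicit that this comparison is what legitimizes both the loss of only $O(1/m)$ positivity \emph{and} the preservation of analytic singularities under gluing. The telescoping-along-$2^\ell$ step to force genuine monotonicity is also correct and is indeed how Demailly arranges it.
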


We now proceed to define least intersection of positive closed currents. We first define a set of good quasi-psh functions with respect to a positive closed $(p,p)$ current $R$.

\begin{definition}
Let $X$ be a compact K\"ahler manifold. Let $R$ be a positive closed $(p,p)$ current on $X$. We define $\mathcal{E}(R)$ to be the set of all $(k-p)$-tuples $(u_1,\ldots ,u_{k-p})$ where $u_j$ are quasi-psh functions with the following property:

For any $1\leq i_1<\ldots <i_q\leq k-p$, and for any sequence of smooth functions $(u_{i_j}^{(n)})$ decreasing to $u_{i_j}$ such that $dd^cu_{i_j}^{(n)}\geq \Omega$ for all $j$ and $n$, here $\Omega$ is a smooth closed $(1,1)$ form on $X$, then the following limit
\begin{eqnarray*}
\lim _{n\rightarrow\infty}dd^cu_{i_1}^{(n)}\wedge \ldots \wedge dd^cu_{i_q}^{(n)}\wedge R,
\end{eqnarray*}
exists, and is independent of the choice of $(u_{i_j}^{(n)})$. We then define $dd^cu_{i_1}\wedge \ldots dd^cu_{i_q}\wedge R$ to be the limit. 
\label{DefinitionClassE}\end{definition}

We have the following simple observation, concerning monotone convergence in $\mathcal{E}(R)$.
\begin{lemma}

Let $(u_1^{(n)},\ldots ,u_{k-p}^{(n)})$ be a sequence in $\mathcal{E}(R)$. Assume that $u_j^{(n)}$ decreases to a quasi-psh function $u$. Moreover, assume that there is a smooth closed $(1,1)$ form $\Omega$ on $X$ such that $dd^cu_j^{(n)}\geq \Omega $ for all $j$ and $n$.

1) If $(u_1,\ldots ,u_{k-p})\in \mathcal{E}(R)$, then for every $1\leq i_1<\ldots <i_q\leq k-p$ we have
\begin{eqnarray*}
\lim _{n\rightarrow\infty}dd^cu_{j_1}^{(n)}\wedge \ldots \wedge dd^cu_{i_q}^{(n)}\wedge R=dd^cu_{i_1}\wedge \ldots \wedge dd^cu_{i_q}\wedge R.
\end{eqnarray*}  

2) If 
\begin{eqnarray*}
\mu =\lim _{n\rightarrow\infty}dd^cu_{1}^{(n)}\wedge \ldots \wedge dd^cu_{k-p}^{(n)}\wedge R,
\end{eqnarray*}
then there is a sequence of smooth functions $(v_{1}^{(n)},\ldots ,v_{k-p}^{(n)})$ with the following properties:

a) $v_{j}^{(n)}$ decreases to $u_{j}$ for all $j$,

b) There is a smooth closed $(1,1)$ form $\Omega '$ such that $dd^cv_{j}^{(n)}\geq \Omega '$ for all $j$ and $n$,

and 

c)
\begin{eqnarray*}
\mu =\lim _{n\rightarrow\infty}dd^cv_{1}^{(n)}\wedge \ldots \wedge dd^cv_{k-p}^{(n)}\wedge R.
\end{eqnarray*}
\label{LemmaClassEIsClosed}\end{lemma}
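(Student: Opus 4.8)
\textbf{Proof proposal for Lemma \ref{LemmaClassEIsClosed}.}

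The plan is to reduce both parts to the defining property of $\mathcal{E}(R)$ together with a diagonal-extraction argument. For part 1), fix indices $1\le i_1<\ldots <i_q\le k-p$. For each fixed $n$, the tuple $(u_1^{(n)},\ldots ,u_{k-p}^{(n)})$ lies in $\mathcal{E}(R)$, so the products $dd^cu_{i_1}^{(n)}\wedge\ldots\wedge dd^cu_{i_q}^{(n)}\wedge R$ are well-defined currents; the question is their limit as $n\to\infty$. I would first observe that $(u_1,\ldots,u_{k-p})\in\mathcal{E}(R)$ by hypothesis, so the target $dd^cu_{i_1}\wedge\ldots\wedge dd^cu_{i_q}\wedge R$ makes sense and, crucially, can be computed using \emph{any} admissible decreasing smooth approximation. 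The idea is then to build, for each $n$, a smooth approximation of $u_{i_j}$ that \emph{starts} near $u_{i_j}^{(n)}$: choose smooth $w_{i_j}^{(n,m)}$ decreasing in $m$ to $u_{i_j}$ with $dd^cw_{i_j}^{(n,m)}\ge\Omega$, and using that $u_{i_j}^{(n)}$ is itself a limit of such smooth functions, interleave to get a single sequence in $m$ whose tail controls both the $n$-th approximant and the limit function. By the defining property of $\mathcal{E}(R)$ applied to $(u_1,\ldots,u_{k-p})$, the $m$-limit along this combined sequence is exactly $dd^cu_{i_1}\wedge\ldots\wedge dd^cu_{i_q}\wedge R$; on the other hand, the currents $dd^cu_{i_1}^{(n)}\wedge\ldots\wedge dd^cu_{i_q}^{(n)}\wedge R$ sit among the terms of this sequence (up to a controlled comparison) and hence converge to the same limit. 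This is where the main technical care is needed: one must make the interleaving precise so that the relevant subsequences genuinely interlace in the decreasing order and the mass bounds $\ge\Omega$ (perhaps replaced by $\ge\Omega'$ for a slightly more negative $\Omega'$) are preserved throughout.

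For part 2), I would argue by a direct diagonal construction. For each $n$, since $(u_1^{(n)},\ldots,u_{k-p}^{(n)})\in\mathcal{E}(R)$, pick smooth functions $v_j^{(n,m)}$ decreasing in $m$ to $u_j^{(n)}$ with $dd^cv_j^{(n,m)}\ge\Omega$, so that $dd^cv_1^{(n,m)}\wedge\ldots\wedge dd^cv_{k-p}^{(n,m)}\wedge R\to dd^cu_1^{(n)}\wedge\ldots\wedge dd^cu_{k-p}^{(n)}\wedge R$ as $m\to\infty$. For each $n$ choose $m(n)$ large enough that simultaneously (i) $v_j^{(n,m(n))}\le u_j^{(n-1)}$ (so the diagonal sequence $w_j^{(n)}:=v_j^{(n,m(n))}$ is decreasing in $n$ — replacing $v_j^{(n,m(n))}$ by $\min$ with the previous term if necessary, which preserves smoothness after a further small approximation, or more cleanly arranging $m(n)$ so the inequality holds outright), (ii) $w_j^{(n)}\to u_j$ pointwise (possible since $u_j^{(n)}\downarrow u_j$ and $v_j^{(n,m)}\downarrow u_j^{(n)}$), and (iii) the current $dd^cw_1^{(n)}\wedge\ldots\wedge dd^cw_{k-p}^{(n)}\wedge R$ is within $1/n$ (in a fixed metric on currents tested against a countable dense family) of $dd^cu_1^{(n)}\wedge\ldots\wedge dd^cu_{k-p}^{(n)}\wedge R$. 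Since the latter converges to $\mu$, so does the former, giving (c); (a) and (b) hold by construction with $\Omega'=\Omega$.

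The step I expect to be the main obstacle is ensuring the \emph{monotonicity in $n$} of the diagonal/interleaved sequences while keeping them smooth and keeping the lower bound on the forms. Taking pointwise minima of two smooth quasi-psh-type functions need not be smooth, so one either arranges the choice of indices $m(n)$ so that $v_j^{(n,m(n))}\le u_j^{(n-1)}\le v_j^{(n-1,m(n-1))}$ holds directly — using that $v_j^{(n-1,m(n-1))}$ is strictly above $u_j^{(n-1)}$ and $v_j^{(n,m)}\downarrow u_j^{(n)}\le u_j^{(n-1)}$, so large $m$ works — or one applies a standard regularized-maximum (Demailly's $\max_\eta$) to restore smoothness at the cost of enlarging $\Omega$ slightly to $\Omega'$. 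Either way the bookkeeping is routine but must be done carefully; everything else follows from the closedness built into Definition \ref{DefinitionClassE} and the continuity of wedging against the fixed current $R$ along admissible monotone approximations.
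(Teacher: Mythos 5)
Your overall strategy (diagonal extraction over a countable dense family of test functions, then deducing part 1) from part 2) via the defining property of $\mathcal{E}(R)$) is the same as the paper's, but the step you yourself flag as the main obstacle --- forcing monotonicity in $n$ of the diagonal sequence --- is exactly where your argument has a genuine gap. Your primary route is to choose $m(n)$ so that $v_j^{(n,m(n))}\le u_j^{(n-1)}$ ``holds outright,'' arguing that $v_j^{(n,m)}\downarrow u_j^{(n)}\le u_j^{(n-1)}$ implies large $m$ works. This is false in general: the comparison function $u_j^{(n-1)}$ is only quasi-psh (upper semicontinuous, possibly $-\infty$ somewhere), so decreasing pointwise convergence of the smooth $v_j^{(n,m)}$ to $u_j^{(n)}$ gives no single index $m$ with $v_j^{(n,m)}\le u_j^{(n-1)}$ everywhere; Dini/Hartogs-type arguments only yield domination by a \emph{continuous} majorant up to an additive $\epsilon$. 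Worse, since $v_j^{(n,m)}\ge u_j^{(n)}$ for all $m$, at any point where $u_j^{(n)}=u_j^{(n-1)}$ and $u_j^{(n)}$ is not smooth the desired inequality typically fails for every $m$. Your fallback --- take a pointwise $\min$ and repair smoothness with Demailly's regularized maximum --- does not work either: $\max_\eta$ regularizes maxima, not minima, and the minimum of two quasi-psh functions need not satisfy any bound $dd^c\ge\Omega'$ (positivity is destroyed, not just smoothness), so there is no smoothing of $\min$ that keeps the uniform lower bound on the forms.

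The paper resolves precisely this point differently: it only asks for \emph{approximate} monotonicity, $w_j^{(l)}\le w_j^{(l-1)}+1/l^2$, which \emph{is} obtainable by Hartogs' lemma because the comparison is made against the previous term $w_j^{(l-1)}$, a smooth (hence continuous) function, rather than against the usc function $u_j^{(n_{l-1})}$; it also records $L^1$ bounds $\|w_j^{(l)}-u_j^{(n_l)}\|_{L^1(X)}\le 1/l^2$. Exact monotonicity is then restored by the additive tail trick $v_j^{(l)}:=w_j^{(l)}+\sum_{h\ge l}1/h^2$, which leaves $dd^cv_j^{(l)}=dd^cw_j^{(l)}$ unchanged (so the lower bound $\Omega'$ and the limiting measure are unaffected) and, together with the $L^1$ bounds, still decreases to $u_j$. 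Without this (or an equivalent) device your construction of the sequence $(v_j^{(n)})$ in part 2) does not go through, and since your part 1) rests on the same interleaving idea, it inherits the same gap. A secondary, minor inaccuracy: Demailly regularization of $u_j^{(n)}$ only gives $dd^c\Phi_m(u_j^{(n)})\ge\Omega-A\omega_X$, so you cannot take $\Omega'=\Omega$ as stated, though the statement of the lemma tolerates any $\Omega'$.
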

\begin{proof}

By Definition of $\mathcal{E}(R)$, we see that 1) follows from 2). Hence it suffices to prove 2).

By Demailly's regularization theorem \cite{demailly1} (which was recalled in Theorem \ref{PropositionDemailly}), for any $j$ and $n$, there are sequences $\Phi _m(u_{j}^{(n)})$ of smooth functions with the following properties: 

i) $\Phi _m(u_{j}^{(n)})$ decreases to $u_{j}^{(n)}$,

and 

ii) $dd^c\Phi _m(u_{j}^{(n)})\geq \Omega -A\omega _X$ for all $j,n,m$. 

Since $X$ is compact, the space $C^0(X)$, equipped with the $L^{\infty}$ norm, is separable. Therefore there is a dense countable set $\mathcal{F}\subset C^0(X)$. We enumerate the elements in $\mathcal{F}$ as $\varphi _1,\varphi _2,\ldots $

For any number $l$, there is a number $n_l$ such that for all $n\geq n_l$ and for all $\varphi \in \{\varphi _1,\ldots ,\varphi _l\}$ 
\begin{equation}
|\int _X\varphi \mu -\int _X\varphi dd^cu_{1}^{(n)}\wedge \ldots \wedge dd^cu_{k-p}^{(n)}\wedge R|\leq \frac{1}{l}.
\label{EquationMinus}\end{equation}

We can choose $n_l$ such that $n_1<n_2<\ldots $. Then for each $j$, the sequence $u_j^{(n_l)}$ decreases to $u_j$. Therefore, we may assume that $||u_j^{(n_l)}-u_j||_{L^1(X)}\leq 1/l^2$ for all $j$ and $l$.

Since $(u_1^{(n_l)},\ldots ,u_{k-p}^{(n_l)})\in \mathcal{E}(R)$, for each $l$ there is a number $m_l$ such that for all $m\geq m_l$ and for all $\varphi \in \{\varphi _1,\ldots ,\varphi _l\}$ 
 \begin{equation}
|\int _X\varphi dd^c\Phi _m(u_1^{(n_l)})\wedge \ldots \wedge dd^c\Phi _m(u_{k-p}^{(n_l)})\wedge R -\int _X\varphi dd^cu_{1}^{(n_l)}\wedge \ldots \wedge dd^cu_{k-p}^{(n_l)}\wedge R|\leq \frac{1}{l}.
\label{Equation0}\end{equation}

Now we choose the sequence of smooth functions $w_j^{(l)}$ decreasing to $u_j$ as follows: 

Choose $w_j^{(1)}=\Phi _{m_1}(u_j^{(n_1)})$. We also arrange so that $||w_j^{(1)}-u_j^{(n_1)}||_{L^1(X)}\leq 1$.

Since $w_j^{(1)}\geq u_j^{(n_1)}\geq u_j^{n_2}$, $w_j^{(1)}$ is smooth,  $\Phi _m(u_1^{(n_2)})$ decreases to $u_j^{(n_2)}$, and $dd^c\Phi _m(u_{j}^{(n_2)})\geq \Omega -A\omega _X$ for all $m$ and $j$, by Hartogs' lemma we have
\begin{eqnarray*}
w_j^{(1)}+1\geq \Phi _m(u_1^{(n_2)}),
\end{eqnarray*}
provided $m$ is large enough. We the choose $m$ large enough such that $w_j^{(2)}=\Phi _m(u_1^{(n_2)})$ satisfies Equation (\ref{Equation0}) and
\begin{eqnarray*}
w_j^{(2)}&\leq& w_j^{(1)}+1,\\
||w_j^{(2)}-u_j^{(n_2)}||_{L^1(X)}&\leq&\frac{1}{2^2},
\end{eqnarray*}

Constructing inductively, we can find a sequence of smooth functions $w_j^{(l)}=\Phi _m(u_1^{(n_l)})$ for some large enough such that Equation (\ref{Equation0}) is satisfied and
\begin{eqnarray*}
w_j^{(l)}&\leq& w_j^{(l-1)}+1/l^2,\\
||w_j^{(l)}-u_j^{(n_l)}||_{L^1(X)}&\leq&\frac{1}{l^2}.
\end{eqnarray*}

Since $dd^cw_j^{(l)}\geq \Omega -A\omega _X$ for all $j$ and $l$, we can assume that there is a signed measure $\mu '$ such that 
\begin{eqnarray*}
\mu '=\lim _{l\rightarrow\infty}dd^cw_1^{(l)}\wedge \ldots \wedge dd^cw_{k-p}^{(l)}\wedge R.
\end{eqnarray*}

By Equations (\ref{EquationMinus}) and (\ref{Equation0}), for all $\varphi \in \mathcal{F}$
\begin{eqnarray*}
\int _X\varphi \mu =\int _X\varphi \mu '.
\end{eqnarray*}
Since $\mathcal{F}$ is dense in $C^0(X)$, it follows that $\mu =\mu '$.  

Finally, we define
\begin{eqnarray*}
v_j^{(l)}=w_j^{(l)}+\sum _{h=l}^{\infty}\frac{1}{h^2}.
\end{eqnarray*}
Then $v_j^{(l)}\geq v_j^{(l+1)}$ and $dd^cv_j^{(l)}=dd^cw_j^{(l)}\geq \Omega$. From the $L^1$ norm bounds $||u_j^{(n_l)}-u_j||_{L^1(X)}\leq 1/l^2$ and $||w_j^{(l)}-u_j^{(n_l)}||_{L^1(X)}\leq 1/l^2$, we obtain that $v_j^{(l)}$ decreases to $u_j$. 
\end{proof}

Based on this, we give the following definition.
\begin{definition}
Let $X$ be a compact K\"ahler manifold of dimension $k$. Let $R$ be a positive closed $(p,p)$ current and $T_1,\ldots ,T_{k-p}$ positive closed $(1,1)$ currents on $X$. For $j=1,\ldots ,k-p$, we write $T_j=\theta _j+dd^cu_j$ where $\theta _j$ is a smooth closed $(1,1)$ form and $u_j$ is a quasi-psh function. 

We let $\mathcal{G}_{T_1,\ldots ,T_{k-p},\theta _1,\theta _2,\ldots ,\theta _j,u_1,\ldots ,u_j, R}$ be the set of signed measures on $X$ of the form
\begin{eqnarray*}
\mu =\lim _{n\rightarrow\infty}T_1^{(n)}\wedge \ldots \wedge T_{k-p}^{(n)}\wedge R. 
\end{eqnarray*}
In the above, for $j=1,\ldots ,k-p$, $T_{j,n}=\theta _j+dd^cu_j^{(n)}$ where $u_j^{(n)}$ is a sequence of smooth functions decreasing to $u_j$ such that 

$$\theta _j+dd^cu_j^{(n)}\geq \Omega ,$$ for all $j,n$, here $\Omega$ is a smooth closed $(1,1)$ form on $X$. 
\label{DefinitionLeastNegativeMass1}\end{definition}

We observe that the set $\mathcal{G}_{T_1,\ldots ,T_{k-p},\theta _1,\theta _2,\ldots ,\theta _j,u_1,\ldots ,u_j, R}$ is independent of the choice of $\theta _1,\ldots ,\theta _j$ and $u_1,\ldots ,u_j$.
\begin{lemma}
In Definition \ref{DefinitionLeastNegativeMass1}, the set $\mathcal{G}_{T_1,\ldots ,T_{k-p},\theta _1,\theta _2,\ldots ,\theta _j,u_1,\ldots ,u_j, R}$  is independent of the choice of $\theta _j$ and $u_j$.
\label{LemmaGIndependent}\end{lemma}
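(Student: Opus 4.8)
The plan is to prove the slightly stronger statement that the two decompositions of the $T_j$ give rise to \emph{term-by-term identical} approximating wedge products, so that the associated sets of limit measures are trivially equal.

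Write $T_j = \theta_j + dd^c u_j = \theta_j' + dd^c u_j'$ for the two given decompositions. First I would record that $\psi_j := u_j' - u_j$ is a \emph{smooth} function on $X$: indeed $dd^c \psi_j = \theta_j - \theta_j'$ is a smooth $(1,1)$ form, so taking the trace with respect to the K\"ahler form $\omega_X$ shows that the distributional Laplacian $\Delta \psi_j$ is a smooth function, whence $\psi_j \in C^\infty(X)$ by elliptic regularity; moreover $dd^c \psi_j = \theta_j - \theta_j'$ as forms.

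Next I would take any $\mu$ in the set $\mathcal{G}$ built from the first decomposition, realized as $\mu = \lim_{n\to\infty} T_1^{(n)} \wedge \ldots \wedge T_{k-p}^{(n)} \wedge R$ with $T_j^{(n)} = \theta_j + dd^c u_j^{(n)}$, where $u_j^{(n)}$ is a sequence of smooth functions decreasing to $u_j$ and $\theta_j + dd^c u_j^{(n)} \geq \Omega$ for all $j,n$ (for some smooth closed $(1,1)$ form $\Omega$). Then I would set $v_j^{(n)} := u_j^{(n)} + \psi_j$. Since $\psi_j$ is a fixed smooth function, $v_j^{(n)}$ is smooth and decreases to $u_j + \psi_j = u_j'$; and
\begin{eqnarray*}
\theta_j' + dd^c v_j^{(n)} = \theta_j' + dd^c u_j^{(n)} + dd^c \psi_j = \theta_j + dd^c u_j^{(n)} \geq \Omega,
\end{eqnarray*}
so the same $\Omega$ serves as the uniform lower bound for the sequence $(v_j^{(n)})$. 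In particular the current $\theta_j' + dd^c v_j^{(n)}$ equals $T_j^{(n)}$ exactly, for every $j$ and $n$, hence $T_1^{(n)} \wedge \ldots \wedge T_{k-p}^{(n)} \wedge R$ is the very same current whether computed from the data $(\theta_\bullet, u_\bullet)$ or from the data $(\theta_\bullet', v_\bullet)$. Therefore the limit exists for the primed data too and equals $\mu$, which shows that the set $\mathcal{G}$ attached to $(\theta_j, u_j)$ is contained in the one attached to $(\theta_j', u_j')$. Exchanging the roles of the two decompositions gives the reverse inclusion, and hence equality.

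I do not expect any genuine obstacle here. The only non-formal ingredient is the smoothness of $\psi_j$, which is routine elliptic regularity; the rest is the observation that adding a fixed smooth function to the quasi-potentials disturbs neither the monotone convergence nor the uniform lower bound, while leaving the approximating currents — and therefore all the wedge products and their limits — unchanged.
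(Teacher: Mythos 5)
Your proof is correct and follows essentially the same route as the paper: both arguments translate the approximating sequence by the fixed smooth function $\psi_j = u_j' - u_j$ (the paper's $\phi_i$ with the opposite sign), observe that the approximating currents $\theta_j' + dd^c(u_j^{(n)}+\psi_j) = \theta_j + dd^c u_j^{(n)}$ are literally unchanged with the same lower bound $\Omega$, and conclude by symmetry of the two decompositions. Your explicit justification of the smoothness of $\psi_j$ via elliptic regularity is a detail the paper asserts without comment, but it does not change the argument.
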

\begin{proof}
Let us assume that we have two different ways to write $T_i$: $T_i=\theta _i+dd^cu_i=\theta _i'+dd^cu_i'$. Then we can write $u_i=u_i'+\phi _i$ where $\phi _i$ is a smooth function such that $dd^c\phi _i=\theta _i'-\theta _i$.

Let $(u_i^{(n)})$ be a sequence of smooth functions decreasing to $u$ such that $\theta _i+dd^cu_i^{(n)}\geq \Omega $ for every $i,n$, here $\Omega$ is a smooth closed $(1,1)$ form. 

Then $(u_i'^{(n)})=(u_i^{(n)}-\phi _i)$ is a sequence of smooth functions decreasing to $u_i-\phi _i=u_i'$. For each $n$ then $$dd^cu_i'^{(n)}=dd^cu_i^{(n)}-dd^c\phi _i\geq \Omega -dd^c\phi _i.$$ Since 
\begin{eqnarray*}
\theta _i'+dd^cu_i'^{(n)}=\theta _i'+dd^cu_i^{(n)}-dd^c\phi _i=\theta _i+dd^cu_i^{(n)}\geq \Omega ,
\end{eqnarray*}
for all $i$, it follows that $$\mathcal{G}_{T_1,\ldots ,T_{k-p},\theta _1,\theta _2,\ldots ,\theta _j,u_1,\ldots ,u_j, R}\subset \mathcal{G}_{T_1,\ldots ,T_{k-p},\theta  _1',\theta _2',\ldots ,\theta _j',u_1',\ldots ,u_j', R}.$$

Exchanging the roles of $(\theta _i,u_i)$ and $(\theta _i',u_i')$, we obtain the reverse inclusion
$$\mathcal{G}_{T_1,\ldots ,T_{k-p},\theta _1',\theta _2',\ldots ,\theta _j',u_1',\ldots ,u_j', R}\subset \mathcal{G}_{T_1,\ldots ,T_{k-p},\theta _1',\theta _2',\ldots ,\theta _j',u_1',\ldots ,u_j', R}.$$
\end{proof}

By Lemma \ref{LemmaGIndependent}, there is a well-defined set $\mathcal{G}_{T_1,\ldots ,T_{k-p},R}$ of signed measures on $X$, depending only on the currents $T_1,\ldots ,T_{k-p}$ and $R$. In general, the set $\mathcal{G}_{T_1,\ldots ,T_{k-p},R}$ is not bounded. We have the following example.
\begin{example}
Let $D$ be an irreducible curve in $X$, and let $T$ be a positive closed $(1,1)$ current. Assume that one (hence every) quasi-potential $u$ of $T$ has the following properties: $e^u$ is continuous near $D$, and $u|_{D}=-\infty$. Then $\mathcal{G}_{T_1,\ldots ,T_{k-p},R}$ contains all measures of the form $\theta \wedge [D]$, here $\theta$ is a smooth closed $(1,1)$ form with the same cohomology class as that of $T$.
\label{Example1}\end{example}
\begin{proof}
Let $\theta$ be a smooth closed $(1,1)$ form with the same cohomology class as that of $T$. We write $T=\theta +dd^cu$. Consider the sequence $u_n=\max \{u,-n\}$. If we choose $\omega _X$ be a positive closed smooth $(1,1)$ form such that $\omega _X\geq \theta$, then for every $n$ we have $dd^cu_n\geq -\omega _X$. The assumption that $e^u$ is continuous near $D$ implies that $dd^cu_n=0$ near $D$. Hence 
\begin{eqnarray*}
\lim _{n\rightarrow\infty}(\theta +dd^cu_n)\wedge [D]=\theta \wedge [D].
\end{eqnarray*}
Lemma \ref{LemmaClassEIsClosed} implies that $\theta \wedge [D]\in \mathcal{G}_{T_1,\ldots ,T_{k-p},R}$.
\end{proof}

\begin{definition}
 We define 
\begin{eqnarray*}
\kappa _{T_1,T_2,\ldots ,T_{k-p},R}=\inf _{\mu \in \mathcal{G}_{T_1,\ldots ,T_{k-p},R}}||\mu ||_{neg}. 
\end{eqnarray*}
Here $||.||_{neg}$ is given in Definition \ref{DefinitionLeastNegativeMass}. 

We let $\mathcal{L}_{T_1,\ldots ,T_{k-p},R}$ be the set of signed measures $\mu$ on $X$ such that
\begin{eqnarray*}
\mu =\lim _{j\rightarrow\infty}\mu _j,
\end{eqnarray*}
for some sequence $(\mu _j)\subset \mathcal{G}_{T_1,\ldots ,T_{k-p},R}$ such that
\begin{eqnarray*}
\lim _{j\rightarrow\infty}||\mu _j||_{neg}=\kappa _{T_1,\ldots ,T_{k-p},R}.
\end{eqnarray*}
\label{DefinitionMeasures}\end{definition} 

By definition, if $\mu\in \mathcal{L}_{T_1,\ldots ,T_{k-p},R}$, then $||\mu ||_{neg}\leq \kappa _{T_1,\ldots ,T_{k-p},R}$. However, a priori the strict inequality may happen.  The measures $\mu\in  \mathcal{L}_{T_1,\ldots ,T_{k-p},R}$ with $||\mu ||_{neg}<\kappa _{T_1,\ldots ,T_{k-p},R}$ are very special, as can be seen from the following lemma. 

\begin{lemma}
Let $\mu\in \mathcal{L}_{T_1,\ldots ,T_{k-p},R}$. Let
\begin{eqnarray*}
\mu =\lim _{j\rightarrow\infty}\mu _j,
\end{eqnarray*}
where for each $j$
\begin{eqnarray*}
\mu _j=\lim _{n\rightarrow\infty}(\theta _1+dd^cu_1^{(n,j)})\wedge \ldots \wedge (\theta _1+dd^cu_{k-p}^{(n,j)})\wedge R.
\end{eqnarray*}
In the above $||\mu _j||_{neg}\rightarrow \kappa _{T_1,\ldots ,T_{k-p},R}$, and $u_i^{(n,j)}$ are smooth functions decreasing to $u_i^{(j)}$ such that 
\begin{eqnarray*}
dd^cu_i^{(n,j)}\geq \Omega _j,
\end{eqnarray*}
for all $n,j$ where $\Omega _j$ is a smooth closed $(1,1)$ form depending on $j$.

Assume that we can choose $\Omega _j$ to be independent of $j$, that is there is a smooth closed $(1,1)$ form $\Omega$ such that $\Omega _1=\Omega _2=\ldots =\Omega $. Then $\mu \in \mathcal{G}_{T_1,\ldots ,T_{k-p},R}$. In particular, $||\mu ||_{neg}=\kappa _{T_1,\ldots ,T_{k-p},R}$.
\label{LemmaCharacterizingMeasures}\end{lemma}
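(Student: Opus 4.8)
The plan is to run a diagonal extraction, parallel to (and slightly simpler than) the proof of Lemma~\ref{LemmaClassEIsClosed}. The goal is to produce a \emph{single} family of smooth functions $v_i^{(l)}$, $i=1,\ldots,k-p$, $l\ge 1$, such that $v_i^{(l)}$ decreases to $u_i$ as $l\to\infty$, such that $\theta_i+dd^cv_i^{(l)}\ge\Omega$ for all $i$ and $l$, and such that
\[
\mu=\lim_{l\to\infty}(\theta_1+dd^cv_1^{(l)})\wedge\cdots\wedge(\theta_{k-p}+dd^cv_{k-p}^{(l)})\wedge R.
\]
By Definition~\ref{DefinitionLeastNegativeMass1} this is exactly the assertion $\mu\in\mathcal{G}_{T_1,\ldots,T_{k-p},R}$. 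Since $X$ is compact, $C^0(X)$ with the sup-norm is separable; fix once and for all a countable dense subset $\mathcal{F}=\{\varphi_1,\varphi_2,\ldots\}\subset C^0(X)$.

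First I would select indices $j_1<j_2<\cdots$ with $|\langle\mu_{j_l}-\mu,\varphi_s\rangle|\le 1/l$ for every $s\le l$; this uses only that $\mu_j\to\mu$ weakly. Then, since for each fixed $l$ one has $\mu_{j_l}=\lim_{n\to\infty}(\theta_1+dd^cu_1^{(n,j_l)})\wedge\cdots\wedge(\theta_{k-p}+dd^cu_{k-p}^{(n,j_l)})\wedge R$ with $u_i^{(n,j_l)}$ decreasing in $n$ to $u_i$, I would choose $n_l$ so large that for all $n\ge n_l$ and all $s\le l$ both
\[
|\langle(\theta_1+dd^cu_1^{(n,j_l)})\wedge\cdots\wedge(\theta_{k-p}+dd^cu_{k-p}^{(n,j_l)})\wedge R-\mu_{j_l},\varphi_s\rangle|\le 1/l
\]
and $\|u_i^{(n,j_l)}-u_i\|_{L^1(X)}\le 1/l^2$ hold. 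Next I would build $w_i^{(l)}$ by induction on $l$: put $w_i^{(1)}:=u_i^{(n_1,j_1)}$, and given a smooth $w_i^{(l-1)}\ge u_i$, observe that the sets $K_{i,n}:=\{x\in X:\ u_i^{(n,j_l)}(x)\ge w_i^{(l-1)}(x)+1/l^2\}$ are closed, hence compact, decrease in $n$, and have empty intersection (a common point $x$ would give $u_i(x)=\lim_n u_i^{(n,j_l)}(x)\ge w_i^{(l-1)}(x)+1/l^2>u_i(x)$); by compactness some $K_{i,n}$ is empty, i.e.\ $u_i^{(n,j_l)}\le w_i^{(l-1)}+1/l^2$ on all of $X$ for some $n\ge n_l$, and I set $w_i^{(l)}:=u_i^{(n,j_l)}$ for such an $n$. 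Then $w_i^{(l)}$ is smooth, $w_i^{(l)}\ge u_i$, $w_i^{(l)}\le w_i^{(l-1)}+1/l^2$, $\|w_i^{(l)}-u_i\|_{L^1(X)}\le 1/l^2$, and, combining the two selections, $|\langle(\theta_1+dd^cw_1^{(l)})\wedge\cdots\wedge(\theta_{k-p}+dd^cw_{k-p}^{(l)})\wedge R-\mu,\varphi_s\rangle|\le 2/l$ for every $s\le l$. Most importantly, $w_i^{(l)}$ is one of the approximants $u_i^{(n,j_l)}$, so $\theta_i+dd^cw_i^{(l)}\ge\Omega_{j_l}=\Omega$; this is the unique place where the hypothesis that $\Omega_j$ can be taken independent of $j$ is used, and it is indispensable.

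To conclude, set $v_i^{(l)}:=w_i^{(l)}+\sum_{h\ge l}1/h^2$. The bound $w_i^{(l)}\le w_i^{(l-1)}+1/l^2$ shows $v_i^{(l)}$ decreases in $l$, while $dd^cv_i^{(l)}=dd^cw_i^{(l)}$ preserves $\theta_i+dd^cv_i^{(l)}\ge\Omega$; and $v_i^{(l)}\to u_i$ in $L^1(X)$, so — being a decreasing sequence of quasi-psh functions with $L^1$-limit $u_i$ — $v_i^{(l)}$ decreases pointwise to $u_i$, exactly as at the end of the proof of Lemma~\ref{LemmaClassEIsClosed}. The forms $(\theta_1+dd^cv_1^{(l)})\wedge\cdots\wedge(\theta_{k-p}+dd^cv_{k-p}^{(l)})\wedge R$ coincide with $(\theta_1+dd^cw_1^{(l)})\wedge\cdots\wedge(\theta_{k-p}+dd^cw_{k-p}^{(l)})\wedge R$, and these have uniformly bounded mass by the standard estimate for wedge products of smooth $(1,1)$-forms that are $\ge\Omega\ge-C\omega_X$ against the fixed positive closed current $R$ (all relevant cohomology numbers are fixed). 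Hence some subsequence converges weakly to a signed measure $\mu'$, and then $\langle\mu',\varphi_s\rangle=\langle\mu,\varphi_s\rangle$ for every $s$; density of $\mathcal{F}$ forces $\mu'=\mu$, and since every subsequential limit equals $\mu$ the whole sequence converges to $\mu$. This proves $\mu\in\mathcal{G}_{T_1,\ldots,T_{k-p},R}$, whence $\|\mu\|_{neg}\ge\kappa_{T_1,\ldots,T_{k-p},R}$; combined with $\|\mu\|_{neg}\le\kappa_{T_1,\ldots,T_{k-p},R}$, which holds for every $\mu\in\mathcal{L}_{T_1,\ldots,T_{k-p},R}$ by definition, this gives $\|\mu\|_{neg}=\kappa_{T_1,\ldots,T_{k-p},R}$.

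The step I expect to be the main obstacle is the diagonal construction itself: turning the mixed sequence $(w_i^{(l)})_l$ — whose terms are pulled from the approximating families attached to \emph{different} indices $j_l$ — into an (essentially) decreasing sequence while retaining a \emph{common} lower bound $\theta_i+dd^cv_i^{(l)}\ge\Omega$. Monotonicity is bought by the compactness (Hartogs-type) comparison together with the harmless constant shifts $\sum_{h\ge l}1/h^2$, precisely as in Lemma~\ref{LemmaClassEIsClosed}; the common lower bound is exactly what the extra hypothesis on the forms $\Omega_j$ provides, and it is what lets the limit land inside $\mathcal{G}_{T_1,\ldots,T_{k-p},R}$ rather than merely inside $\mathcal{L}_{T_1,\ldots,T_{k-p},R}$.
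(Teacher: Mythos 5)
Your proposal is correct and follows essentially the same route as the paper, whose proof of Lemma \ref{LemmaCharacterizingMeasures} simply invokes the diagonal construction from the proof of Lemma \ref{LemmaClassEIsClosed}; you carry out exactly that construction (choice of a countable dense family in $C^0(X)$, extraction of indices $j_l$ and $n_l$, the Dini/Hartogs-type comparison to make the sequence essentially decreasing, the constant shifts $\sum_{h\ge l}1/h^2$, and the uniform mass bound coming from the common lower bound $\Omega$), with the simplification that no Demailly regularization is needed since the inner approximants are already smooth. The only points you gloss over are harmless normalizations (the limits $u_i^{(j)}$ differ from $u_i$ by constants, and the lower-bound condition is stated with or without $\theta_i$), so there is no genuine gap.
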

\begin{proof}
That $\mu \in \mathcal{G}_{T_1,\ldots ,T_{k-p},R}$ can be proved by using the same argument as in the proof of Lemma \ref{LemmaClassEIsClosed}. 
\end{proof}

By definition, any measure $\mu \in \mathcal{L}_{T_1,\ldots ,T_{k-p},R}$ can be written as $\mu =\mu ^+-\mu ^-$, where $\mu ^{\pm}$ are positive measures of total masses 
\begin{eqnarray*}
\int _X\mu ^+=\{T_1\}\ldots \{T_{k-p}\}.\{R\}+c,~\int _X\mu ^-=c.
\end{eqnarray*}
Here $c$ satisfies $0\leq c\leq\kappa _{T_1,\ldots ,T_{k-p},R}$, the number $\kappa _{T_1,\ldots ,T_{k-p},R}$ being given in Definition \ref{DefinitionMeasures}.

By Theorem \ref{PropositionDemailly}, there is a positive number $A>0$ independent of $T_1,\ldots ,T_{k-p}$ and $R$ such that there is a measure $\mu \in \mathcal{G}_{T_1,\ldots ,T_{k-p},R}$ for which 
\begin{eqnarray*}
||\mu ||_{neg}\leq A||T_1||\times \ldots \times ||T_{k-p}||\times ||R||.
\end{eqnarray*}
This implies that $$\kappa _{T_1,\ldots ,T_{k-p},R}\leq A||T_1||\times \ldots \times ||T_{k-p}||\times ||R||.$$

Therefore, for every continuous function $\varphi :X\rightarrow \mathbb{R}$, the following number
\begin{equation}
<\bigwedge (T_1,\ldots ,T_{k-p},R),\varphi >:=\sup _{\mu \in \mathcal{L}_{T_1,\ldots ,T_{k-p},R}} \int _X\varphi \mu ,
\label{EquationDefinitionWedge}\end{equation}
is finite. 

Since all $\mu \in \mathcal{L}_{T_1,\ldots ,T_{k-p},R}$ has the same total mass $\mu \in \mathcal{L}_{T_1,\ldots ,T_{k-p},R}$, we see that for any constant $B$
\begin{eqnarray*}
<\bigwedge (T_1,\ldots ,T_{k-p},R),\varphi +B>=<\bigwedge (T_1,\ldots ,T_{k-p},R),\varphi >+B<\bigwedge (T_1,\ldots ,T_{k-p},R),1>. 
\end{eqnarray*}

The following result relates $\bigwedge (T_1,\ldots ,T_{k-p},R)$ to some other functions defined on $\mathcal{G}_{T_1,\ldots ,T_{k-p},R}$ only.
\begin{lemma}
Fix a smooth function $\varphi $ on $X$. For each $\epsilon >0$ we define
\begin{eqnarray*}
<\bigwedge (T_1,\ldots ,T_{k-p},R;\epsilon ),\varphi >:=\sup _{\mu \in \mathcal{G}_{T_1,\ldots ,T_{k-p},R},~||\mu ||_{neg}\leq \kappa _{T_1,\ldots ,T_{k-p},R} +\epsilon }\int _X\varphi \mu .
\end{eqnarray*}
Then $\lim _{\epsilon\rightarrow 0}\bigwedge (T_1,\ldots ,T_{k-p},R;\epsilon )=\bigwedge (T_1,\ldots ,T_{k-p},R)$.
\end{lemma}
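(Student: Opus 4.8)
The plan is to fix $\varphi$ once and for all, abbreviate $I(\epsilon):=<\bigwedge(T_1,\ldots,T_{k-p},R;\epsilon),\varphi>$, $I:=<\bigwedge(T_1,\ldots,T_{k-p},R),\varphi>$, $\kappa:=\kappa_{T_1,\ldots,T_{k-p},R}$, $\mathcal{G}:=\mathcal{G}_{T_1,\ldots,T_{k-p},R}$, $\mathcal{L}:=\mathcal{L}_{T_1,\ldots,T_{k-p},R}$, and to prove $\lim_{\epsilon\to 0}I(\epsilon)=I$ by establishing the two inequalities separately. First I would record two elementary facts: (i) $\epsilon\mapsto I(\epsilon)$ is non-decreasing, since the admissible set $\{\mu\in\mathcal{G}:\ ||\mu||_{neg}\leq\kappa+\epsilon\}$ grows with $\epsilon$; and (ii) for $\epsilon\leq 1$ every admissible $\mu$ has total mass $\{T_1\}\ldots\{T_{k-p}\}.\{R\}$ and $||\mu||_{neg}\leq\kappa+1$, hence total variation at most $\{T_1\}\ldots\{T_{k-p}\}.\{R\}+2(\kappa+1)=:M$, so that $|I(\epsilon)|\leq M\,||\varphi||_{L^{\infty}}$. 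Consequently the (decreasing) limit $L:=\lim_{\epsilon\to 0}I(\epsilon)$ exists and is finite.

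For the inequality $I\leq L$ I would take an arbitrary $\mu\in\mathcal{L}$, write $\mu=\lim_j\mu_j$ (weakly) with $\mu_j\in\mathcal{G}$ and $||\mu_j||_{neg}\to\kappa$ — this is precisely how $\mathcal{L}$ is defined in Definition \ref{DefinitionMeasures}. Given $\epsilon>0$, for all large $j$ one has $||\mu_j||_{neg}\leq\kappa+\epsilon$, so $\int_X\varphi\mu_j\leq I(\epsilon)$; letting $j\to\infty$ (weak convergence, $\varphi$ continuous) gives $\int_X\varphi\mu\leq I(\epsilon)$, then $\epsilon\to 0$ gives $\int_X\varphi\mu\leq L$, and taking the supremum over $\mu\in\mathcal{L}$ yields $I\leq L$.

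For the reverse inequality $L\leq I$ I would, for each $n\geq 1$, use the definition of $I(1/n)$ as a supremum to choose $\mu_n\in\mathcal{G}$ with $||\mu_n||_{neg}\leq\kappa+1/n$ and $\int_X\varphi\mu_n\geq I(1/n)-1/n$. Since $\kappa\leq||\mu_n||_{neg}\leq\kappa+1/n$ we have $||\mu_n||_{neg}\to\kappa$; moreover the $\mu_n$ have total variation $\leq M$, so by weak-$\ast$ compactness of bounded measures on the compact space $X$ a subsequence $\mu_{n_l}$ converges weakly to a signed measure $\mu_\infty$. Then $\mu_\infty\in\mathcal{L}$ by Definition \ref{DefinitionMeasures}, and $\int_X\varphi\mu_\infty=\lim_l\int_X\varphi\mu_{n_l}\geq\lim_l\bigl(I(1/n_l)-1/n_l\bigr)=L$, where the last equality uses that the monotone function $\epsilon\mapsto I(\epsilon)$ tends to $L$ as $\epsilon\downarrow 0$ while $1/n_l\to 0$. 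Hence $I\geq\int_X\varphi\mu_\infty\geq L$. Combining the two inequalities gives $I=L$, i.e. $\lim_{\epsilon\to 0}I(\epsilon)=I$; since nothing beyond the continuity of $\varphi$ was used, the statement in fact holds for every $\varphi\in C^0(X)$.

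The only step requiring a bit of care, and the one I would flag as the main obstacle, is the weak-$\ast$ compactness argument in the last paragraph: one must make sure the near-optimal measures $\mu_n$ have uniformly bounded total variation (so that a weakly convergent subsequence exists) and that the resulting weak limit genuinely lies in $\mathcal{L}$. Both hold because $||\mu_n||_{neg}$ is squeezed between $\kappa$ and $\kappa+1/n$ while all the $\mu_n$ share the fixed total mass $\{T_1\}\ldots\{T_{k-p}\}.\{R\}$; in contrast to Lemma \ref{LemmaClassEIsClosed}, no regularization of the quasi-potentials is needed here.
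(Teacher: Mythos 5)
Your argument follows essentially the same route as the paper's proof: monotonicity of $\epsilon\mapsto\bigwedge(T_1,\ldots,T_{k-p},R;\epsilon)$, the inequality $\leq$ by pairing a sequence realizing any $\mu\in\mathcal{L}_{T_1,\ldots,T_{k-p},R}$ against the $\epsilon$-suprema, and the reverse inequality by extracting a weak limit of near-optimal measures in $\mathcal{G}_{T_1,\ldots,T_{k-p},R}$ and recognizing it as an element of $\mathcal{L}_{T_1,\ldots,T_{k-p},R}$. The only differences are cosmetic refinements: you justify the compactness step (which the paper passes over with ``without loss of generality'') via the uniform total-variation bound coming from the fixed cohomological mass and $||\mu_n||_{neg}\leq\kappa+1$, and you observe that the positivity assumption on $\varphi$ in the paper's second half is unnecessary.
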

\begin{proof}
By definition, if $\epsilon '>\epsilon >0$ then $\bigwedge (T_1,\ldots ,T_{k-p},R;\epsilon ')\geq \bigwedge (T_1,\ldots ,T_{k-p},R;\epsilon )$. Therefore, the limit 
\begin{eqnarray*}
\bigwedge (T_1,\ldots ,T_{k-p},R)'=\lim _{\epsilon\rightarrow 0}\bigwedge (T_1,\ldots ,T_{k-p},R;\epsilon )
\end{eqnarray*}
 exists. 
 
If $\mu \in \mathcal{L}_{T_1,\ldots ,T_{k-p},R}$, then there is a sequence $\mu _j\in \mathcal{G}_{T_1,\ldots ,T_{k-p},R}$ with $||\mu _j||_{neg}\leq \kappa _{T_1,\ldots ,T_{k-p},R} +1/j $ such that 
\begin{eqnarray*}
\mu =\lim _{j\rightarrow\infty}\mu _j\leq \lim _{j\rightarrow\infty}\bigwedge (T_1,\ldots ,T_{k-p},R;1/j )=\bigwedge (T_1,\ldots ,T_{k-p},R)'.
\end{eqnarray*}
Therefore
\begin{eqnarray*}
\bigwedge (T_1,\ldots ,T_{k-p},R)=\sup _{\mu \in \mathcal{L}_{T_1,\ldots ,T_{k-p},R}}\mu \leq \bigwedge (T_1,\ldots ,T_{k-p},R)'.
\end{eqnarray*}

Now we prove the reverse inequality. Let $\varphi$ be a positive function on $X$. For each $j$, we choose a measure $\mu _j\in \mathcal{G}_{T_1,\ldots ,T_{k-p},R}$ with $||\mu _j||\leq \kappa _{T_1,\ldots ,T_{k-p},R} +1/j $ such that 
\begin{eqnarray*}
|<\bigwedge (T_1,\ldots ,T_{k-p},R;1/j ),\varphi >-\int _X\varphi \mu |\leq 1/j.
\end{eqnarray*}
It follows that 
\begin{eqnarray*}
 <\bigwedge (T_1,\ldots ,T_{k-p},R)',\varphi >=\lim _{j\rightarrow\infty}\int _X\varphi \mu _j.
\end{eqnarray*}
Without loss of generality, we may assume that 
\begin{eqnarray*}
\lim _{j\rightarrow\infty}\mu _j=\mu . 
\end{eqnarray*}
Then, $\mu \in \mathcal{L}_{T_1,\ldots ,T_{k-p},R}$, and we obtain
\begin{eqnarray*}
<\bigwedge (T_1,\ldots ,T_{k-p},R)',\varphi >=\lim _{j\rightarrow\infty}\int _X\varphi \mu _j=\int _X\varphi \mu\leq \int _X\varphi \bigwedge (T_1,\ldots ,T_{k-p},R).
\end{eqnarray*}
 Therefore, $\bigwedge (T_1,\ldots ,T_{k-p},R)'\leq \bigwedge (T_1,\ldots ,T_{k-p},R)$, as wanted.
 \end{proof}

\section{Proofs of main results}
\begin{proof}[Proof of Theorem \ref{TheoremMain1}]

Let $\bigwedge (T_1,\ldots ,T_{k-p},R)$ be defined as in Equation (\ref{EquationDefinitionWedge}).

1) Follows easily from the definition. 

2) This follows since the definition of $\mathcal{G}_{T_1,\ldots ,T_{k-p},R}$,  and thus $\kappa _{T_1,\ldots ,T_{k-p},R}$ and $\mathcal{L}_{T_1,\ldots ,T_{k-p},R}$, is symmetric in $T_1,\ldots ,T_{k-p}$.

3) By definition, $\mathcal{G}_{T_1,\ldots ,T_{k-p},R}$ is independent of the choice of a K\"ahler form on $X$. Since the mass of a measure is independent of a K\"ahler form on $X$, the number $\kappa _{T_1,\ldots ,T_{k-p},R}$ is also independent of the choice of a K\"ahler form on $X$. Consequently, $\mathcal{L}_{T_1,\ldots ,T_{k-p},R}$ is also independent of the choice of a K\"ahler form on $X$.

4) For any $\mu \in \mathcal{G}_{T_1,\ldots ,T_{k-p},R}$, the total mass of $\mu$ is $\{T_1\}\ldots \{T_{k-p}\}.\{R\}$. Therefore, the total mass of $\bigwedge (T_1,\ldots ,T_{k-p},R)$ is
\begin{eqnarray*}
\int _{X}\bigwedge (T_1,\ldots ,T_{k-p},R)&=&\sup _{\mu \in \mathcal{L}_{T_1,\ldots ,T_{k-p},R}}\int _X\mu =\sup _{\mu \in \mathcal{L}_{T_1,\ldots ,T_{k-p},R}}\{T_1\}\ldots \{T_{k-p}\}.\{R\}\\
&=&\{T_1\}\ldots \{T_{k-p}\}.\{R\}.
\end{eqnarray*}

5) If $U\subset X$ is an open set over which the monotone convergence in Equation (\ref{EquationMonotoneConvergence}) holds, then for any $\mu \in \mathcal{G}_{T_1,\ldots ,T_{k-p},R}$ we have
\begin{eqnarray*}
\mu |_U=T_1|_U\wedge \ldots \wedge T_{k-p}|_U\wedge R|_{U}.
\end{eqnarray*}
From this, the claim follows.  

6) By Part 2) of Theorem \ref{PropositionDemailly}, for each $j$ there is a sequence $u_j^{(n)}$ of quasi-psh functions with analytic singularities and a sequence of positive numbers $\epsilon _n$ decreasing to $0$ (here $\epsilon _n$ can be chosen to be the same for all $j$)  such that:

i) $u_j^{(n)}$ decreases to $u_j$,

ii) $\theta _j+dd^cu_{j}^{(n)}\geq -\epsilon _n\omega _X$ for all $j,n$,

and

iii) The Lelong numbers of $u_j^{(n)}$ increases to the Lelong numbers of $u_j$.

Since $u_{j,n}$ has analytic singularities, for each $j$ and $n$ there is a subvariety $V_{j,n}\subset X$ which is exactly the set where the Lelong numbers of $u_j^{(n)}$ are positive
\begin{eqnarray*}
V_{j,n}=E_{>0}(u_j^{(n)}).
\end{eqnarray*}
Moreover, $u_{j,n}$ is smooth outside $V_{j,n}$.

(Here we used the following result on Lelong numbers of psh functions with analytic singularities. Let $$u(z)=\frac{1}{2}\log \sum _{j=1}^m|f_j|^2$$
be defined in an open set $0\in \Omega \subset \mathbb{C}^n$. Here $f_1,\ldots ,f_m$ are holomorphic functions, $f_1(0)=\ldots =f_m(0)=0$. Then the Lelong number of $u$ at $0$ is 
\begin{eqnarray*}
\nu (u,0)=\min _{j=1,\ldots ,m} \mbox{mult}_0(f_j)>0. 
\end{eqnarray*} 
)

Property iii) implies that $V_{j,n}\subset E_{>0}(u_j)$. The assumptions on $E_{>0}(u_j)$ implies that $(u_1^{(n)},\ldots ,u_{j}^{(n)})\in \mathcal{E}(R)$ (see for example Section 4, Chapter 3 in Demailly \cite{demailly}). 

Property ii) implies that any cluster point of the sequence $(\theta _1+dd^cu_1^{(n)})\wedge \ldots \wedge (\theta _1+dd^cu_{k-p}^{(n)})\wedge R$ is a positive measure. By  Lemma \ref{LemmaClassEIsClosed}, each of these clusters points is in $\mathcal{G}_{T_1,\ldots ,T_{k-p},R}$. Therefore $\bigwedge (T_1,\ldots ,T_{k-p},R)$ is positive. 

7) Let notations be as in the proof of part 6). Since $V_{j_1,n}\cap \ldots \cap V_{j_q,n}\cap W$ is a variety, the assumption on $E_{>0}(u_j)$ implies that the dimension of $V_{j_1,n}\cap \ldots \cap V_{j_q,n}\cap W$ is at most $k-p-q$. Therefore, we can apply again the results in Demailly \cite{demailly} to conclude that $(u_1^{(n)},\ldots ,u_{j}^{(n)})\in \mathcal{E}(R)$. Then, we proceed as in the proof of part 6).

8) Let $\mu$ be a measure in $\mathcal{G}_{T_1,\ldots ,T_{k-p},R}$. Then either $\mu \in \mathcal{L}_{T_1,\ldots ,T_{k-p},R}$ or $>\kappa _{T_1,\ldots ,T_{k-p},R}$. In the first case, it follows from definition that $\bigwedge (T_1,\ldots ,T_{k-p},R)-\mu \geq 0$. In the second case, we choose $\mu '$ to be any measure in $\mathcal{L}_{T_1,\ldots ,T_{k-p},R}$. Then $||\mu '||_{neg}<||\mu ||_{neg}$ and $\bigwedge (T_1,\ldots ,T_{k-p},R)-\mu '\geq 0$.
\end{proof}

\begin{proof}[Proof of Proposition \ref{Proposition0}]
By assumption on $\kappa _{T_1,\ldots ,T_{k-p},R}$, for every $\mu\in \mathcal{L} _{T_1,\ldots ,T_{k-p},R}$ then $\mu$ is a positive measure. Moreover, 
\begin{eqnarray*}
\mu |_U=T_1|_U\wedge \ldots \wedge T_{k-p}|_U\wedge R|_U.
\end{eqnarray*}
Therefore 
\begin{eqnarray*}
\mu \geq (T_1|_U\wedge \ldots \wedge T_{k-p}|_U\wedge R|_U)^o.
\end{eqnarray*}
Since the mass of the two measures in the above are the same, we conclude that they are the same. From this, the proposition follows.
\end{proof}


\begin{thebibliography}{TTT}

\bibitem{ahag-cegrell-czyz-pham}{P. Ahag, U. Cegrell, R. Czyz and H.H. Pham,} \textit{Monge-Ampere measures on pluri-polar sets,} J. Math. Pures. Appl. 92 (2009), 613--627.

\bibitem{bedford-taylor}{ E. Bedford and B. A. Taylor,} \textit{The Dirichlet problem for a complex Monge-Ampere equation,} Invent. Math. 37 (1976), no. 1, 1--44.

\bibitem{bedford-taylor1}{ E. Bedford and B. A. Taylor,} \textit{Fine topology, Shilov boundary, and $(dd^c)^n$,} J. Func. Anal. 72 (1987), 225--251.  

\bibitem{boucksom-eyssidieux-guedj-zeriahi}{ S. Boucksom, P. Eyssidieux, V. Guedj and A. Zeriahi,} \textit{Monge-Ampere equations in big cohomology classes,} Acta. Math. 205 (2010), 199--262. 

\bibitem{cegrell}{ U. Cegrell,} \textit{Pluricomplex energy,} Acta. Math. 180 (1998), 187--217.

\bibitem{cegrell1}{ U. Cegrell,} \textit{The general definition of the complex Monge-Ampere operator,} Ann. Inst. Fourier (Grenoble) 54 (2004), 159--179.

\bibitem{celik-poletsky}{ H. Celik and E. Poletsky,} \textit{Fundamental solutions of the complex Monge-Ampere equations,} Ann. Polon. Math. 67 (1997), 103--110.

\bibitem{demailly1}{J.-P. Demailly,} \textit{Regularization of closed positive currents and intersection theory,} J. Algebraic Geometry 1. (1992), 361--409.

\bibitem{demailly2}{J.-P. Demailly,} \textit{Mesures de Monge-Ampere et mesures pluriharmoniques,} Math. Z. 194 (1987), 519--564.

\bibitem{demailly}{J.-P. Demailly,} \textit{Complex analytic and differential geometry,} Online book, version of Thursday 10 September 2009.

\bibitem{dinh-sibony4}{T-C Dinh and N. Sibony,} \textit{Super-potentials of positive closed currents, intersection theory and dynamics,} Acta Math. 203 (2009) , no 1, 1--82.

\bibitem{dinh-sibony3}{T-C Dinh and N. Sibony,} \textit{Super-potentials for currents on compact K\"ahler manifolds and dynamics of automorphisms,} J. Algebraic Geom. 19 (2010) , no 3, 473--529.

\bibitem{dinh-sibony2}{T-C Dinh and N. Sibony,} \textit{Density of positive closed currents and dynamics of Henon-type automorphisms of $C^k$ (part I),} arXiv: 1203.5810.

\bibitem{fornaess-sibony2}{J. E. Fornaess and N. Sibony,} \textit{Oka's inequality for currents and applications,} Math. Ann. 301 (1995), 399--419. 

\bibitem{guedj-zeriahi}{V. Guedj and A. Zeriahi,} \textit{The weighted Monge-Ampere energy of quasiplurisubharmonic functions,} J. Func. Anal. 250 (2007), 442--482.

\bibitem{kolodziej} S. Kolodziej, {\em The complex Monge-Ampere equation,} Acta. Math. 180 (1998), 69--117.

\bibitem{lelong}{P. Lelong,} \textit{Fonction de Green pluricomplexe et lemmes de Schwarz dans les espaces de Banach,} J. Math. Pures Appl. 68(1989), 319--347.

\bibitem{lempert} L. Lempert, {\em La metrique de Kobayashi et la representation des domaines sur la boulle,} Bull. Soc. Math. France 109 (1981), 427--474.

\bibitem{lempert1} L. Lempert, {\em Solving the degenerate Monge-Ampere equation with one concentrated singularity,} Math. Ann. 263 (1983), 515--532. 

\bibitem{xing} Y. Xing, {\em Complex Monge-Ampere equations with a countable number of singular points,} Indiana Univ. Math. J. 48 (1999), 749--765.

\bibitem{yau} S. T. Yau, {\em On the Ricci curvature of a compact K\"ahler manifold and the complex Monge-Ampere equation. I.} Comm. Pure. Appl. Math. 31 (1978), 339--411.

\bibitem{zeriahi} A. Zeriahi, {\em Pluricomplex Green functions and the Dirichlet problem for the complex Monge-Ampere operator,} Michigan Math. J. 44 (1997), 579--596.

\bibitem{truong} T.T. Truong, {\em Pseudo-isomorphisms in dimension $3$ and applications to complex Monge-Ampere operators}, arXiv: 1403.5325.


\end{thebibliography}
\end{document}